



\input{ifpdf.sty}
\pdffalse

\makeatletter

\newdimen\paperwidth
\newdimen\paperheight

\def\papersize#1#2{\let\p@persize\relax\paperwidth#1\paperheight#2}

\def\Afour{\papersize{210truemm}{297truemm}}

\let\p@persize\Afour

\let\onesidestyle\@twosidefalse
\let\twosidestyle\@twosidetrue

\def\margins{\@ifnextchar[{\@margins}{\@margins[\z@]}}

\def\@margins[#1]#2#3{
  \p@persize\dimen0 #3\dimen0 .5\dimen0\normalsize%
  \oddsidemargin-1truein\advance\oddsidemargin#2%
  \evensidemargin-1truein\advance\evensidemargin#2%
  \topmargin-1truein\advance\topmargin\dimen0\headsep\dimen0\footskip\dimen0%
  \textwidth\paperwidth\advance\textwidth-#2\advance\textwidth-#2%
  \textheight\paperheight\advance\textheight-#3\advance\textheight-#3%
  \headheight\baselineskip\advance\topmargin-.5\baselineskip%
  \advance\headsep-.5\baselineskip%
  \footheight\baselineskip
  \advance\textwidth-#1\advance\oddsidemargin#1
  \if@twoside\def\@themargin%
    {\ifodd\count\z@\oddsidemargin\else\evensidemargin\fi}\fi}

\def\headlinesep#1{\advance\topmargin\headsep\advance\topmargin -#1
  \advance\topmargin.5\baselineskip\headsep #1\advance\headsep-.5\baselineskip}

\def\headline{\if@twoside\let\n@xt\h@dlin@\else\let\n@xt\h@@dlin@\fi\n@xt}
  
\def\h@dlin@#1#2{%
  \def\@oddhead{%
    {{\leftskip\z@\rightskip\z@\noindent\normalsize#1}}}
  \def\@evenhead{%
    {{\leftskip\z@\rightskip\z@\noindent\normalsize#2}}}}

\def\h@@dlin@#1{%
  \def\@oddhead{{{\leftskip\z@\rightskip\z@\noindent\normalsize#1}}}}

\def\footline{\if@twoside\let\n@xt\f@tlin@\else\let\n@xt\f@@tlin@\fi\n@xt}

\def\f@tlin@#1#2{%
  \def\@oddfoot{%
    {{\leftskip\z@\rightskip\z@\noindent\normalsize#1}}}
  \def\@evenfoot{%
    {{\leftskip\z@\rightskip\z@\noindent\normalsize#2}}}}

\def\f@@tlin@#1{%
  \def\@oddfoot{{{\leftskip\z@\rightskip\z@\noindent\normalsize#1}}}}

\def\normalpage{\global\@specialpagefalse}

\makeatother


\makeatletter

\def\ft{\@ifnextchar[{\ft@s}{\ft@}}
\def\ft@{\ft@@@s[\f@size]}
\def\ft@s[{\@ifnextchar{a}{\ft@sz[}{\ft@@s[}}
\def\ft@@s[{\@ifnextchar{s}{\ft@sz[}{\ft@@@s[}}
\def\ft@@@s[#1]{\ft@sz[at #1pt]}
\def\ft@sz[#1]#2{\font\fonttemp=#2 #1\fonttemp\ignorespaces}

\makeatother















\makeatletter

\def\@@bold{bold}

\def\widebar{\ifx\math@version\@@bold
  \let\@widebar\@@@widebar\else\let\@widebar\@@widebar\fi\@widebar}

\def\@@widebar#1{\text{\setbox15\hbox{$#1$}%
  \dimen15 0.45\wd15\advance\dimen15 0.15\ht15%
  \dimen16\ht15\advance\dimen16 0.00em\advance\dimen16 0.3ex%
  \dimen17 0.65\wd15\advance\dimen17 0.05\ht15\advance\dimen17 0.1ex%
  \dimen18 0.035em\advance\dimen18 0.00ex
  \put[\dimen15,\dimen16][c]{\vrule depth 0pt height \dimen18 width \dimen17}}#1}

\def\@@@widebar#1{\text{\setbox15\hbox{$#1$}%
  \dimen15 0.45\wd15\advance\dimen15 0.15\ht15%
  \dimen16\ht15\advance\dimen16 0.00em\advance\dimen16 0.26ex%
  \dimen17 0.65\wd15\advance\dimen17 0.05\ht15\advance\dimen17 0.1ex%
  \dimen18 0.05em\advance\dimen18 0.00ex
  \put[\dimen15,\dimen16][c]{\vrule depth 0pt height \dimen18 width \dimen17}}#1}

\makeatother

\def\smallsquare{\raise-.065em\hbox{$\Box$}}
\def\smallblacksquare{%
  \kern.3ex\vrule depth-.03ex height1.27ex width1.15ex \kern-1.45ex \smallsquare}

\def\smallcircc{\mathop{\mkern3.5mu\text{\raise.58ex\hbox{\ft{lcircle10}a}}}}
\def\varemptyset{{\text{\raise.21ex\hbox{$\not$}}\mkern.15mu\mathrm{O}\mkern.15mu}}



  \let\epsilon\varepsilon
      \let\theta\vartheta
          \let\phi\varphi
   \let\emptyset\varemptyset

\documentstyle[12pt]{article}

\makeatletter

\let\Larg@\Large
\let\hug@\huge

\def\usepackage#1{\input{#1.sty}}

\input{geom.sty}
\input{option_keys.sty}

\let\input\@input

\def\r@adlabel#1#2{\global\@namedef{#1@\the\@key}{#2}}

\let\Large\Larg@
\let\huge\hug@

\def\smallskip{\vskip\smallskipamount}
\def\medskip{\vskip\medskipamount}
\def\bigskip{\vskip\bigskipamount}

\def\mytrivlist{\parsep\parskip\@nmbrlistfalse
  \my@trivlist \labelwidth\z@ \leftmargin\z@
  \itemindent\z@ \def\makelabel##1{##1}}

\def\my@trivlist{\global\@newlisttrue \@outerparskip\parskip}

\def\end#1{\csname end#1\endcsname\@checkend{#1}%
  \expandafter\endgroup\if@endpe\@doendpe\fi
  \if@ignore \global\@ignorefalse \ignorespaces\fi}


\input{epsf.sty}
\usepackage{graphicx}


\def\put{\@ifnextchar[{\@put}{\@@rput[\z@,\z@][r]}}
\def\@put[#1]{\@ifnextchar[{\@@put[#1]}{\@@@@@put[#1]}}
\def\@@put[#1][{\@ifnextchar{l}{\@@lput[#1][}{\@@@put[#1][}}
\def\@@@put[#1][{\@ifnextchar{c}{\@@cput[#1][}{\@@@@put[#1][}}
\def\@@@@put[#1][{\@ifnextchar{r}{\@@rput[#1][}{\relax}}
\def\@@@@@put[{\@ifnextchar{l}{\@@lput[\z@,\z@][}{\@@@@@@put[}}
\def\@@@@@@put[{\@ifnextchar{c}{\@@cput[\z@,\z@][}{\@@@@@@@put[}}
\def\@@@@@@@put[{\@ifnextchar{r}{\@@rput[\z@,\z@][}{\@@@@@@@@put[}}
\def\@@@@@@@@put[#1]{\@@rput[#1][r]}

\let\hm@d@\leavevmode

\long\def\@@lput[#1,#2][l]#3{\setbox0\hbox{#3}\hm@d@\raise#2\hbox to\z@{\dimen0 #1%
  \advance\dimen0-\wd0\kern\dimen0\dp0\z@\ht0\z@\wd0\z@\box0\hss}\ignorespaces}
\long\def\@@cput[#1,#2][c]#3{\setbox0\hbox{#3}\hm@d@\raise#2\hbox to\z@{\dimen0 #1%
  \advance\dimen0-.5\wd0\kern\dimen0\dp0\z@\ht0\z@\wd0\z@\box0\hss}\ignorespaces}
\long\def\@@rput[#1,#2][r]#3{\setbox0\hbox{\kern#1\raise#2\hbox{#3}}%
  \dp0\z@\ht0\z@\wd0\z@\hm@d@\box0\ignorespaces}

\def\flbox{\@ifnextchar[{\@flbox}{\@@rflbox[\z@,\z@][r]}}
\def\@flbox[#1]{\@ifnextchar[{\@@flbox[#1]}{\@@@@@flbox[#1]}}
\def\@@flbox[#1][{\@ifnextchar{l}{\@@lflbox[#1][}{\@@@flbox[#1][}}
\def\@@@flbox[#1][{\@ifnextchar{c}{\@@cflbox[#1][}{\@@@@flbox[#1][}}
\def\@@@@flbox[#1][{\@ifnextchar{r}{\@@rflbox[#1][}{\relax}}
\def\@@@@@flbox[{\@ifnextchar{l}{\@@lflbox[\z@,\z@][}{\@@@@@@flbox[}}
\def\@@@@@@flbox[{\@ifnextchar{c}{\@@cflbox[\z@,\z@][}{\@@@@@@@flbox[}}
\def\@@@@@@@flbox[{\@ifnextchar{r}{\@@rflbox[\z@,\z@][}{\@@@@@@@@flbox[}}
\def\@@@@@@@@flbox[#1]{\@@rflbox[#1][r]}
\long\def\@@lflbox[#1,#2][l]#3{\@@lput[#1,#2][l]{%
  \vtop{\leftskip\z@\parindent\z@\raggedleft\hm@d@#3}}}
\long\def\@@cflbox[#1,#2][c]#3{\@@cput[#1,#2][c]{%
  \vtop{\leftskip\z@\parindent\z@\raggedcenter\hm@d@#3}}}
\long\def\@@rflbox[#1,#2][r]#3{\@@rput[#1,#2][r]{%
  \vtop{\leftskip\z@\parindent\z@\raggedright\hm@d@#3}}}


\def\maketitle{\par
 \begingroup
 \def\thefootnote{\fnsymbol{footnote}}
 \def\@makefnmark{\hbox 
 to 0pt{$^{\@thefnmark}$\hss}} 
 \if@twocolumn 
 \twocolumn[\@maketitle] 
 \else 
 \global\@topnum\z@ \@maketitle \fi\thispagestyle{plain}\@thanks
 \endgroup
 \setcounter{footnote}{0}
 \let\maketitle\relax
 \let\@maketitle\relax
 \gdef\@thanks{}\gdef\@author{}\gdef\@title{}\let\thanks\relax}

\def\@maketitle{ 
 \null
 \vskip 2em \begin{center}
 {\LARGE \@title \par} \vskip 1.5em {\large \lineskip .5em
\begin{tabular}[t]{c}\@author 
 \end{tabular}\par} 
 \vskip 1em {\large \@date} \end{center}
 \par
 \vskip 1.5em}
 

\def\partbeforeskip#1{\def\p@rtbeforeskip{#1}}
\def\partstyle#1{\def\p@rtstyl@{#1}}
\def\partdot#1{\def\partd@t{#1}}
\def\partafterskip#1{\def\p@rtafterskip{#1}}
\def\partintrostyle#1{\def\partintr@styl@{#1}}
\def\partintrodot#1{\def\partintr@dot{#1}}
\long\def\partintrosep#1{\long\def\partintr@sep{#1}}
\def\partnewpagetrue{\def\p@rtnewp@ge{\newpage}}
\def\partnewpagefalse{\long\def\p@rtnewp@ge{\par}}

\partbeforeskip{4ex}
\partstyle{\centering\Large\bf}
\partdot{}
\partafterskip{3ex}
\partintrostyle{\large}
\partintrodot{}
\partintrosep{\par}
\partnewpagefalse

\def\partname{Part}
\def\part{\p@rtnewp@ge\addvspace\p@rtbeforeskip\@afterindentfalse\secdef\@part\@spart}

\def\@part[#1]#2{\ifnum \c@secnumdepth >-1\relax  
        \refstepcounter{part}                     
        \def\@tempa{\addcontentsline{toc}{part}}  %
        \expandafter\@tempa\expandafter{\thepart  
          \hspace{1em}#1}\else                    
        \addcontentsline{toc}{part}{#1}\fi        
   {\p@rtstyl@                       
    \ifnum \c@secnumdepth >-1\relax        
      {\partintr@styl@\partname\ \thepart  
       \partintr@dot}\partintr@sep\nobreak 
    \fi                                    
    #2\partd@t\markboth{}{}\par}
    \nobreak                       
    \vskip\p@rtafterskip           
   \@afterheading                  
    }                              

\def\@spart#1{{\p@rtcentering\p@rtstyl@                      
    #1\partd@t\par}                 
    \nobreak                        
    \vskip\p@rtafterskip            
    \@afterheading                  
  }                                 


\newif\ifsection@ftind
\newif\ifsection@ftpar

\def\sectionbeforeskip#1{\def\s@ctbeforeskip{#1}}
\def\sectionstyle#1{\def\s@ctstyl@{#1}}
\def\sectiondot#1{\def\sectiond@t{#1}}
\def\sectionafterskip#1{\def\s@ctafterskip{#1}}
\def\sectionintrostyle#1{\def\sectionintr@styl@{#1}}
\def\sectionintro#1{\def\sectionintr@{#1}}
\def\sectionintrodot#1{\def\sectionintr@dot{#1}}
\def\sectionintrosep#1{\def\sectionintr@sep{#1}}
\def\sectionindenttrue{\def\s@ctind{\parindent}}
\def\sectionindentfalse{\def\s@ctind{\z@}}
\def\sectionafterindenttrue{\section@ftindtrue}
\def\sectionafterindentfalse{\section@ftindfalse}
\def\sectionafternewlinetrue{\section@ftpartrue}
\def\sectionafternewlinefalse{\section@ftparfalse}

\newif\ifsubsection@ftind
\newif\ifsubsection@ftpar

\def\subsectionbeforeskip#1{\def\ss@ctbeforeskip{#1}}
\def\subsectionstyle#1{\def\ss@ctstyl@{#1}}
\def\subsectiondot#1{\def\subsectiond@t{#1}}
\def\subsectionafterskip#1{\def\ss@ctafterskip{#1}}
\def\subsectionintrostyle#1{\def\subsectionintr@styl@{#1}}
\def\subsectionintro#1{\def\subsectionintr@{#1}}
\def\subsectionintrodot#1{\def\subsectionintr@dot{#1}}
\def\subsectionintrosep#1{\def\subsectionintr@sep{#1}}
\def\subsectionindenttrue{\def\ss@ctind{\parindent}}
\def\subsectionindentfalse{\def\ss@ctind{\z@}}
\def\subsectionafterindenttrue{\subsection@ftindtrue}
\def\subsectionafterindentfalse{\subsection@ftindfalse}
\def\subsectionafternewlinetrue{\subsection@ftpartrue}
\def\subsectionafternewlinefalse{\subsection@ftparfalse}

\newif\ifsubsubsection@ftind
\newif\ifsubsubsection@ftpar

\def\subsubsectionbeforeskip#1{\def\sss@ctbeforeskip{#1}}
\def\subsubsectionstyle#1{\def\sss@ctstyl@{#1}}
\def\subsubsectiondot#1{\def\subsubsectiond@t{#1}}
\def\subsubsectionafterskip#1{\def\sss@ctafterskip{#1}}
\def\subsubsectionintrostyle#1{\def\subsubsectionintr@styl@{#1}}
\def\subsubsectionintro#1{\def\subsubsectionintr@{#1}}
\def\subsubsectionintrodot#1{\def\subsubsectionintr@dot{#1}}
\def\subsubsectionintrosep#1{\def\subsubsectionintr@sep{#1}}
\def\subsubsectionindenttrue{\def\sss@ctind{\parindent}}
\def\subsubsectionindentfalse{\def\sss@ctind{\z@}}
\def\subsubsectionafterindenttrue{\subsubsection@ftindtrue}
\def\subsubsectionafterindentfalse{\subsubsection@ftindfalse}
\def\subsubsectionafternewlinetrue{\subsubsection@ftpartrue}
\def\subsubsectionafternewlinefalse{\subsubsection@ftparfalse}

\newif\ifparagraph@ftind
\newif\ifparagraph@ftpar

\def\paragraphbeforeskip#1{\def\p@rbeforeskip{#1}}
\def\paragraphstyle#1{\def\p@rstyl@{#1}}
\def\paragraphdot#1{\def\paragraphd@t{#1}}
\def\paragraphafterskip#1{\def\p@rafterskip{#1}}
\def\paragraphintrostyle#1{\def\paragraphintr@styl@{#1}}
\def\paragraphintro#1{\def\paragraphintr@{#1}}
\def\paragraphintrodot#1{\def\paragraphintr@dot{#1}}
\def\paragraphintrosep#1{\def\paragraphintr@sep{#1}}
\def\paragraphindenttrue{\def\p@rind{\parindent}}
\def\paragraphindentfalse{\def\p@rind{\z@}}
\def\paragraphafterindenttrue{\paragraph@ftindtrue}
\def\paragraphafterindentfalse{\paragraph@ftindfalse}
\def\paragraphafternewlinetrue{\paragraph@ftpartrue}
\def\paragraphafternewlinefalse{\paragraph@ftparfalse}

\newif\ifsubparagraph@ftind
\newif\ifsubparagraph@ftpar

\def\subparagraphbeforeskip#1{\def\sp@rbeforeskip{#1}}
\def\subparagraphstyle#1{\def\sp@rstyl@{#1}}
\def\subparagraphdot#1{\def\subparagraphd@t{#1}}
\def\subparagraphafterskip#1{\def\sp@rafterskip{#1}}
\def\subparagraphintrostyle#1{\def\subparagraphintr@styl@{#1}}
\def\subparagraphintro#1{\def\subparagraphintr@{#1}}
\def\subparagraphintrodot#1{\def\subparagraphintr@dot{#1}}
\def\subparagraphintrosep#1{\def\subparagraphintr@sep{#1}}
\def\subparagraphindenttrue{\def\sp@rind{\parindent}}
\def\subparagraphindentfalse{\def\sp@rind{\z@}}
\def\subparagraphafterindenttrue{\subparagraph@ftindtrue}
\def\subparagraphafterindentfalse{\subparagraph@ftindfalse}
\def\subparagraphafternewlinetrue{\subparagraph@ftpartrue}
\def\subparagraphafternewlinefalse{\subparagraph@ftparfalse}

\sectionbeforeskip{\bigskipamount}
\sectionstyle{\large\bf}
\sectiondot{}
\sectionafterskip{.5\bigskipamount}
\sectionintrostyle{}
\sectionintro{}
\sectionintrodot{.}
\sectionintrosep{1.25ex}
\sectionindentfalse
\sectionafterindenttrue
\sectionafternewlinetrue

\subsectionbeforeskip{.8\bigskipamount}
\subsectionstyle{\normalsize\bf}
\subsectiondot{}
\subsectionafterskip{.4\bigskipamount}
\subsectionintrostyle{}
\subsectionintro{}
\subsectionintrodot{.}
\subsectionintrosep{1.25ex}
\subsectionindentfalse
\subsectionafterindenttrue
\subsectionafternewlinetrue

\subsubsectionbeforeskip{.6\bigskipamount}
\subsubsectionstyle{\normalsize\bf}
\subsubsectiondot{}
\subsubsectionafterskip{.3\bigskipamount}
\subsubsectionintrostyle{}
\subsubsectionintro{}
\subsubsectionintrodot{.}
\subsubsectionintrosep{1.25ex}
\subsubsectionindentfalse
\subsubsectionafterindenttrue
\subsubsectionafternewlinetrue

\paragraphbeforeskip{.5\bigskipamount}
\paragraphstyle{\normalsize\bf}
\paragraphdot{.}
\paragraphafterskip{1.25ex}
\paragraphintrostyle{}
\paragraphintro{}
\paragraphintrodot{.}
\paragraphintrosep{1.25ex}
\paragraphindentfalse
\paragraphafterindenttrue
\paragraphafternewlinefalse

\subparagraphbeforeskip{.5\bigskipamount}
\subparagraphstyle{\normalsize\bf}
\subparagraphdot{.}
\subparagraphafterskip{1.25ex}
\subparagraphintrostyle{}
\subparagraphintro{}
\subparagraphintrodot{.}
\subparagraphintrosep{1.25ex}
\subparagraphindenttrue
\subparagraphafterindenttrue
\subparagraphafternewlinefalse

\let\@partoken\par
\long\def\@@gobble#1{}
\def\ignorepar{\@ifnextchar\@partoken{\expandafter\ignorepar\@@gobble}{\ignorespaces}}

\def\@startsection#1#2#3#4#5#6{
   \@tempskipa #4\relax
   \csname if#1@ftind\endcsname\@afterindenttrue\else\@afterindentfalse\fi
   \advance\@tempskipa by\presection
   \if@nobreak \everypar{}\else
     \addpenalty{\@secpenalty}\addvspace{\@tempskipa}%
     \allowbreak\vskip -\presection \fi \@ifstar
     {\@ssect{#1}{#2}{#3}{#4}{#5}{#6}}{\@dblarg{\@sect{#1}{#2}{#3}{#4}{#5}{#6}}}}

\def\@sect#1#2#3#4#5#6[#7]#8{\def\object@type{#1}%
   \ifnum #2>\c@secnumdepth\def\@svsec{}\def\@tempb{}%
      \else\refstepcounter{#1}\def\@svsec{{\csname #1intr@styl@\endcsname%
        {\csname #1intr@\endcsname}\csname the#1\endcsname%
        \csname #1intr@dot\endcsname\kern\csname #1intr@sep\endcsname}}%
        \edef\@tempb{\noexpand\numberline{\csname the#1\endcsname}}\fi%
   \def\@tempa{\addcontentsline{toc}{#1}}%
   \csname if#1@ftpar\endcsname%
      \begingroup #6\relax%
        \@hangfrom{\hskip #3\relax\@svsec}{\interlinepenalty \@M{#8}%
        \csname #1d@t\endcsname\par}%
      \endgroup%
      \csname #1mark\endcsname{#7}%
      \expandafter\@tempa\expandafter{\@tempb #7}%
      \ifautolabel\label*{#8}\fi%
   \else%
      \def\@svsechd{#6\hskip #3\relax%
         \@svsec{#8}%
         \csname #1d@t\endcsname%
         \csname #1mark\endcsname{#7}%
         \expandafter\@tempa\expandafter{\@tempb #7}%
         \ifautolabel\label*{#8}\fi}\fi%
   \@xsect{#1}{#5}\ignorepar}

\def\@ssect#1#2#3#4#5#6#7{%
   \ifnum #2>\c@secnumdepth\def\@tempb{}\else \def\@tempb{\numberline{}}\fi%
     \def\@tempa{\addcontentsline{toc}{s#1}}%
     \csname if#1@ftpar\endcsname
        \begingroup #6\relax
           \@hangfrom{\hskip #3}{\interlinepenalty \@M{#7}%
           \csname #1d@t\endcsname\par}%
        \endgroup
        \csname s#1mark\endcsname{#7}%
        \ifstarredcontents\expandafter\@tempa\expandafter{\@tempb #7}\fi%
        \ifautolabel\label*{#7}\fi%
     \else%
        \def\@svsechd{#6\hskip #3\relax{#7}%
        \csname #1d@t\endcsname%
        \csname s#1mark\endcsname{#7}%
        \ifautolabel\label*{#7}\fi}\fi
   \@xsect{#1}{#5}\ignorepar}

\def\@xsect#1#2{
   \csname if#1@ftpar\endcsname 
       \par \nobreak \vskip #2\relax \@afterheading
    \else \global\@nobreakfalse \global\@noskipsectrue
       \everypar{\if@noskipsec \global\@noskipsecfalse
                   \clubpenalty\@M \hskip -\parindent
                   \begingroup \@svsechd \endgroup \unskip
                   \hskip #2\relax  
                  \else \clubpenalty \@clubpenalty
                    \everypar{}\fi}\fi\ignorespaces}

\def\section{\@startsection{section}{1}{\s@ctind}
  {\s@ctbeforeskip}{\s@ctafterskip}{\s@ctstyl@}}
\def\subsection{\@startsection{subsection}{2}{\ss@ctind}
  {\ss@ctbeforeskip}{\ss@ctafterskip}{\ss@ctstyl@}}
\def\subsubsection{\@startsection{subsubsection}{3}{\sss@ctind}
  {\sss@ctbeforeskip}{\sss@ctafterskip}{\sss@ctstyl@}}
\def\paragraph{\@startsection{paragraph}{4}{\p@rind}
  {\p@rbeforeskip}{\p@rafterskip}{\p@rstyl@}}
\def\subparagraph{\@startsection{subparagraph}{4}{\sp@rind}
  {\sp@rbeforeskip}{\sp@rafterskip}{\sp@rstyl@}}


\def\statementabove#1{\def\th@bove{#1}}
\def\statementstyle#1{\def\thstyl@{#1}}
\def\statementbelow#1{\def\thb@low{#1}}
\def\statementindentfalse{\let\thind@nt\relax}
\def\statementindenttrue{\let\thind@nt\indent}

\def\statementintrostyle#1{\def\thintr@style{#1}}
\def\statementintrodot#1{\def\thintr@dot{#1}}
\def\statementintrosep#1{\def\thintr@sep{#1}}
\def\statementintrobrackets#1#2{\def\thintr@left{#1}\def\thintr@right{#2}}

\statementabove{\medskip}
\statementstyle{\sl}
\statementbelow{\medskip}
\statementindenttrue

\statementintrostyle{\normalshape\bf}
\statementintrodot{.}
\statementintrosep{\kern1.25ex}
\statementintrobrackets{(}{)}

\def\@thskip{\dimen100\lastskip\vskip-\dimen100%
  \th@bove\dimen101\lastskip\vskip-\dimen101%
  \ifdim\dimen100>\dimen101\else\dimen100\dimen101\fi\vskip\dimen100\vskip0pt}

\long\def\@@newtheorem#1#2#3{%
  \newenvironment{#3}%
    {\def\object@type{#3}\par\@thskip%
     \@ifnextchar[{\@enva{#3}{\thstyl@#1{#2}}}{\@envb{#3}{\thstyl@#1{#2}}}}%
    {\end{#3@}}%
  \@ifnextchar[{\@nothm{#3}}{\@nnthm{#3}}}

\def\@nothm#1[#2]#3{%
  \@ifundefined{c@#2}{\@latexerr{No theorem environment `#2' defined}\@eha}%
  {\expandafter\@ifdefinable\csname #1@\endcsname
  {\global\@namedef{the#1}{\@nameuse{the#2}}%
   \global\@namedef{c@#1}{\@nameuse{c@#2}}
   \global\@namedef{p@#1}{\@nameuse{p@#2}}
   \global\@namedef{#1@}{\@nnnthm{#2}{#3}}%
   \global\@namedef{end#1@}{\@endtheorem}}}}

\def\@nnnthm#1#2{\refstepcounter
    {#1}\@ifnextchar[{\@ynnnthm{#1}{#2}}{\@xnnnthm{#1}{#2}}}
 
\def\@xnnnthm#1#2{\@begintheorem{#2}{\csname the#1\endcsname}\ignorespaces}
\def\@ynnnthm#1#2[#3]{\@opargbegintheorem{#2}{\csname the#1\endcsname}{#3}\ignorespaces}

\def\renewtheorem{\@ifnextchar[{\@renewtheorem}{\@renewtheorem[{}{}]}}

\long\def\@renewtheorem[#1]{\@@renewtheorem#1}

\long\def\@@renewtheorem#1#2#3{%
  \expandafter\let\csname#3@\endcsname\undefined
  \renewenvironment{#3}%
    {\def\object@type{#3}\par\@thskip%
     \@ifnextchar[{\@enva{#3}{\thstyl@#1{#2}}}{\@envb{#3}{\thstyl@#1{#2}}}}%
    {\end{#3@}}%
  \@ifnextchar[{\@nothm{#3}}{\@nnthm{#3}}}

\def\@begintheorem#1#2{\@opargbegintheorem{#1}{#2}{}}

\def\@opargbegintheorem#1#2#3{%
        \edef\@tempx{#1}%
        \expandafter\let\expandafter\@tempy#2
        \def\@tempz{#3}%
        \mytrivlist\item[\thind@nt\hskip\labelsep%
        {\thintr@style%
          #1\ifx\@tempx\@empty\else\ifx\@tempy\relax\else\kern1ex\fi\fi#2%
          \ifx\@tempz\@empty%
            \ifx\@tempx\@empty\ifx\@tempy\relax%
            \else\thintr@dot\thintr@sep\fi\else\thintr@dot\thintr@sep\fi%
            \else%
            \ifx\@tempx\@empty\ifx\@tempy\relax\else\kern1ex\fi\else\kern1ex\fi%
           \thintr@left{#3}\thintr@right\thintr@dot\thintr@sep\fi}%
            \hskip-\labelsep]%
        \ifautolabel\label*{#3}\fi}

\def\@endtheorem{\endtrivlist\thb@low}


\def\proofname{Proof}

\def\proofabove#1{\def\pf@bove{#1}}
\def\proofstyle#1{\def\pfstyl@{#1}}
\def\proofbelow#1{\def\pfb@low{#1}}
\def\proofindentfalse{\let\pfind@nt\relax}
\def\proofindenttrue{\let\pfind@nt\indent}

\def\proofintrostyle#1{\def\pfintr@style{#1}}
\def\proofintrodot#1{\def\pfintr@dot{#1}}
\def\proofintrosep#1{\def\pfintr@sep{#1}}
\def\proofintrobrackets#1#2{\def\pfintr@left{#1}\def\pfintr@right{#2}}

\proofabove{\medskip}
\proofstyle{}
\proofbelow{\medskip}
\proofindenttrue

\proofintrostyle{\sl}
\proofintrodot{.}
\proofintrosep{\kern1.25ex}
\proofintrobrackets{of\kern1ex}{}

\def\@pfskip{\dimen100\lastskip\vskip-\dimen100%
  \pf@bove\dimen101\lastskip\vskip-\dimen101%
  \ifdim\dimen100>\dimen101\else\dimen100\dimen101\fi\vskip\dimen100\vskip0pt}

\renewenvironment{proof}%
  {\@pfskip\mytrivlist\item[\pfind@nt]\@ifnextchar[{\pro@f}{\pro@f[\prooftag]}}
  {\ifvoid\provedbox\else\hproved\fi\endtrivlist\pfb@low}

\def\pro@f[#1]{\setbox\provedbox\hbox{\provedboxcontents{#1}}\proofintro{#1}}

\def\proofintro#1{\expandafter\def\expandafter\@tempa\expandafter{#1}%
  {\pfintr@style{\proofname\ifx\@tempa\empty\else\kern1ex\pfintr@left{#1}%
  \pfintr@right\fi}\pfintr@dot\pfintr@sep}\pfstyl@\ignorespaces}

\def\provedmark#1{\def\prm@rk{#1}}
\def\provedsep#1{\def\prs@p{#1}}

\provedmark{$\square$}
\provedsep{\kern1.25ex}

\def\provedtexttrue{\def\prb@x##1{\fbox{\small##1}}}
\def\provedtextfalse{\def\prb@x##1{\prm@rk}}
\def\provedmarkrighttrue{\let\prhf@l\hfill}
\def\provedmarkrightfalse{\let\prhf@l\relax}

\provedtextfalse
\provedmarkrightfalse

\def\provedboxcontents#1{\expandafter\def\expandafter\@tempa\expandafter{#1}%
  \ifx\@tempa\empty\prm@rk\else\prb@x{#1}\fi}

\def\proved{\ifmmode\eqno{\box\provedbox}\else\hproved\fi}

\def\hproved{\unskip\nobreak\prhf@l\penalty50\prs@p\hbox{}\nobreak\prhf@l
  \box\provedbox{\finalhyphendemerits=0\par}}


\def\captionstyle#1{\def\c@ptstyl@{#1}}
\def\captionintrostyle#1{\def\c@pintr@style{#1}}
\def\captionintrodot#1{\def\c@pintr@dot{#1}}
\def\captionintrosep#1{\def\c@pintr@sep{#1}}

\captionstyle{\small\sf}
\captionintrostyle{\bf}
\captionintrodot{.}
\captionintrosep{\hskip1.25ex}

\long\def\@makecaption#1#2{%
    \vskip\captionskip
    \setbox\@tempboxa\hbox{%
      \ifproofing\@ifundefined{the@label}{}
        {\hbox to 0pt{\vbox to 0pt{\vss\hbox{\tiny\the@label}\bigskip}\hss}}\fi
      \c@ptstyl@{\c@pintr@style #1\c@pintr@dot}\ignorespaces #2}%
    \@captionwidth=\hsize \advance\@captionwidth-2\@captionmargin
    \ifdim \wd\@tempboxa >\@captionwidth {%
        \rightskip=\@captionmargin\leftskip=\@captionmargin
        \unhbox\@tempboxa\par}%
      \else
        \hbox to\hsize{\hfil\box\@tempboxa\hfil}%
    \fi}

\def\end@Float#1{%
  \expandafter\caption\expandafter[\the@title]{%
   {\c@pintr@style%
   \ifx\the@caption\empty\ifx\the@title\empty
   \else\c@pintr@sep\fi\else\c@pintr@sep\fi
    \the@title\ifx\the@caption\empty%
     \expandafter\label\expandafter*\expandafter{\the@label}%
    \else\ifx\the@title\empty%
     \expandafter\label\expandafter*\expandafter{\the@label}%
    \else\c@pintr@dot\c@pintr@sep%
     \expandafter\label\expandafter*\expandafter{\the@label}\fi\fi}%
   \ignorespaces\the@caption}%
  \end{#1}}


\renewenvironment{Figure}{\@ifnextchar[%
  {\@myFloat{figure}}{\@myFloat{figure}[htbp]}}{\end@Float{figure}}

\def\@myFloat#1[#2]#3{%
  \def\color@hbox{}\def\color@vbox{}\def\color@endbox{}%
  \begin{#1}[#2]\def\the@label{#3}}

\def\fig#1{\@ifnextchar[{\@fig{#1}}{\@fig{#1}[0pt]}}

\def\@fig#1[#2]#3{\@ifnextchar[{\@@fig{#1}[#2]{#3}}{\@@fig{#1}[#2]{#3}[0pt]}}
\def\@@fig#1[#2]#3[#4]#5#6{%
  \def\the@title{#5}\def\the@caption{#6}\centerline{\fig@{#1}{#2}{#3}}\vskip#4}

\def\fig@@#1#2#3{\leavevmode{\figstyl@\vrule width 0pt height 1.8ex%
 \smash{\framebox{\strut\def\@temp{#1}\ifx\@temp\@empty{ #3 }\else{ #1 }\fi}}}}
\def\fig@@@#1#2#3{\leavevmode\kern#2\epsfbox{#3}}

\def\figstyle#1{\def\figstyl@{#1}}

\figstyle{\fontfamily{cmr}\fontshape{n}\fontseries{m}\selectfont\normalsize}

\newcounter{diagram}

\let\thediagram\theequation
\def\ftype@diagram{2}
\def\ext@diagram{lod}
\def\diagram{\@float{diagram}}
\let\enddiagram\end@float
\newif\if@diagnum

\def\diag#1{\@ifnextchar[{\@diag{#1}}{\@diag{#1}[0pt]}}

\def\@diag#1[#2]#3{\@ifnextchar[{\@@diag{#1}[#2]{#3}}{\@@diag{#1}[#2]{#3}[0pt]}}

\def\@@diag#1[#2]#3[#4]#5{
  \def\the@tag{#5}\@eqnswtrue%
  \centerline{\setbox0\hbox{\diag@{#1}{#2}{#3}}
  \dimen0 -0.5\wd0\dimen1 0.5\ht0\box0%
  \advance\dimen0 0.5\hsize\advance\dimen0 -\rightskip\advance\dimen1 #4%
  \let\@currentlabel\the@tag%
  \setbox0\hbox to 0pt{\hss%
    \fontfamily{cmr}\fontshape{n}\fontseries{m}\selectfont(\the@tag)}%
  \ifx\the@tag\@empty\refstepcounter{equation}\let\@currentlabel\theequation%
    \setbox0\hbox to 0pt{\hss%
      \fontfamily{cmr}\fontshape{n}\fontseries{m}\selectfont(\thediagram)}\fi%
  \if@eqnsw\else\let\@currentlabel\relax\setbox0\hbox to 0pt{}\fi%
  \advance\dimen1 -0.5\ht0%
  \put[\dimen0,\dimen1][l]{%
    \box0\expandafter\label\expandafter*\expandafter{\the@label}\kern0.15em}}}

\def\diag@@#1#2#3{\leavevmode{\diagstyl@\vrule width 0pt height 1.8ex%
 \smash{\framebox{\strut\def\@temp{#1}\ifx\@temp\@empty{ #3 }\else{ #1 }\fi}}}}
\def\diag@@@#1#2#3{\leavevmode\kern#2\epsfbox{#3}}

\def\diagstyle#1{\def\diagstyl@{#1}}

\diagstyle{\fontfamily{cmr}\fontshape{n}\fontseries{m}\selectfont\normalsize}

\def\showfiguresfalse{\let\fig@\fig@@}
\def\showfigurestrue{\let\fig@\fig@@@}

\def\showdiagramsfalse{\let\diag@\diag@@}
\def\showdiagramstrue{\let\diag@\diag@@@}

\showfigurestrue
\showdiagramstrue


\def\n@number{\@eqnswfalse\let\@currentlabel\relax\let\the@tag\relax}

\def\equation{$$
  \@eqnswtrue\def\object@type{equation}\let\nonumber\n@number%
  \advance\c@equation1\edef\@currentlabel{\theequation}\advance\c@equation-1%
  \def\the@tag{\refstepcounter{equation}\eqno\hbox{\@eqnnum}}}

\def\tag#1{\edef\@currentlabel{#1}\def\the@tag{\eqno\hbox{\reset@font\rm(#1)}}}

\def\endequation{\the@tag$$
  \global\@ignoretrue}


\dimen200\topsep
\dimen201\partopsep
\topsep0pt
\partopsep0pt

\let\it@m\item
\def\item{\@ifnextchar[{\item@}{\item@@}}
\def\item@[#1]{\it@m[#1]\vskip-\lastskip\vskip\itemsep}
\def\item@@{\it@m\vskip-\lastskip\vskip\itemsep}
\def\s@titemsep{\@ifnextchar[{\s@@titemsep}{\relax}}
\def\s@@titemsep[#1]{\itemsep#1}

\let\@itemize\itemize
\let\@enditemize\enditemize

\let\@enumerate\enumerate
\let\@endenumerate\endenumerate
\renewenvironment{enumerate}
{\@enumerate\itemsep3pt\parsep0pt\topsep0pt\partopsep0pt\s@titemsep}
{\@endenumerate\vskip-\lastskip\vskip\itemsep}

\let\@description\description
\let\@enddescription\enddescription

\topsep\dimen200
\partopsep\dimen201


\@definecounter{bibenumi}

\def\thebibliography#1{%
 \section*{\refname}\vskip-\lastskip%
 \list{[\arabic{bibenumi}]}{\topsep0pt\settowidth\labelwidth{[#1]}%
 \leftmargin\labelwidth\advance\leftmargin\labelsep\usecounter{bibenumi}}%
 \def\newblock{\hskip .11em plus .33em minus .07em}%
 \sloppy\clubpenalty4000\widowpenalty4000\sfcode`\.=1000\relax}


\let\@ref@\ref
\let\@pageref@\pageref
\let\@fullref@\fullref
\let\@Fullref@\Fullref
\let\@reftype@\reftype
\let\@Reftype@\Reftype
\let\@label@\label
\let\@cite@\cite
\let\@bibitem@\bibitem

\def\label{\@ifnextchar*{\label@}{\label@{}}}
\def\label@#1#2{\@label@#1{#2}\putl@bel{#2}\ignorespaces}
\def\putl@b@l#1{\put[0pt,.25\baselineskip]{%
  \hbox{\labc@lor{\fontfamily{cmr}\fontshape{n}\fontseries{m}\selectfont%
  \tiny\setbox5\hbox{\vphantom{X}\smash{\ns#1}}%
  \hbox to 0pt{\hss\tiny$\blacktriangledown$\kern-.085em}%
  \raise2.25ex\hbox to 0pt{\hss\framebox{\box5}}}}}}

\def\putr@fl@bel#1{{\let\labc@lor\refc@lor\putl@bel{#1}}}
\def\ref@#1{\@ref@{#1}\putr@fl@bel{#1}}
\def\pageref@#1{\@pageref@{#1}\putr@fl@bel{#1}}
\def\fullref@#1{\@fullref@{#1}\putr@fl@bel{#1}}
\def\Fullref@#1{\@Fullref@{#1}\putr@fl@bel{#1}}
\def\reftype@#1{\@reftype@{#1}\putr@fl@bel{#1}}
\def\Reftype@#1{\@Reftype@{#1}\putr@fl@bel{#1}}

\def\ref@@#1{\leavevmode\refc@lor{\rm$\langle$#1$\rangle$}}
\let\pageref@@\ref@@
\let\Fullref@@\ref@@
\let\fullref@@\ref@@
\let\reftype@@\ref@@
\let\Reftype@@\ref@@

\def\bibitem{\@ifnextchar[{\bibitem@@}{\bibitem@@@}}
\def\bibitem@@[#1]#2{\@bibitem@[#1]{#2}\putl@bel{#2}}
\def\bibitem@@@#1{\@bibitem@{#1}\putl@bel{#1}\ignorespaces}

\def\cit@{\@ifnextchar[{\@cit@@@}{\@cit@@}}
\def\@cit@@#1{\@cite@{#1}{\let\labc@lor\citc@lor\putl@bel{#1}}}
\def\@cit@@@[#1]#2{\@cite@[#1]{#2}{\let\labc@lor\citc@lor\putl@bel{#2}}}
\def\cit@@{\@ifnextchar[{\cit@@@@}{\cit@@@}}
\def\cit@@@#1{\leavevmode{\citc@lor\rm[#1]}}
\def\cit@@@@[#1]#2{\leavevmode{\citc@lor\rm[#2, #1]}}



\def\showcitationstrue{\let\cite\cit@}
\def\showcitationsfalse{\let\cite\cit@@}

\def\showreferencestrue{%
  \let\ref\ref@\let\pageref\pageref@%
  \let\fullref\fullref@\let\Fullref\Fullref@%
  \let\reftype\reftype@\let\Reftype\Reftype@}
\def\showreferencesfalse{%
  \let\ref\ref@@\let\pageref\pageref@@%
  \let\fullref\fullref@@\let\Fullref\Fullref@@%
  \let\reftype\reftype@@\let\Reftype\Reftype@@}

\def\showlabelstrue{\let\putl@bel\putl@b@l}
\def\showlabelsfalse{\let\putl@bel\hid@@}


\def\postit@{\@ifnextchar[{\postit@@}{\p@tp@stit}}
\def\postit@@[#1]{\postit@@@#1,@}
\def\postit@@@#1,{\@ifnextchar{@}{\p@@tp@stit{#1}}{\postit@@@@#1,}}
\def\postit@@@@#1,#2,@{\p@@@tp@stit{#1}{#2}}

\long\def\p@tp@stit#1{\put[0pt,.25\baselineskip]{%
  \hbox{\postitc@lor{\fontfamily{cmr}\fontshape{n}\fontseries{m}\selectfont%
  \tiny\setbox5\hbox{\vphantom{X}\smash{\ns#1}}%
  \hbox to 0pt{\hss\tiny$\blacktriangledown$\kern-.085em}%
  \raise2.25ex\hbox to 0pt{\hss\framebox{\box5}}}}}}

\long\def\p@@tp@stit#1@#2{\put[0pt,.25\baselineskip]{%
  \hbox{\postitc@lor{\fontfamily{cmr}\fontshape{n}\fontseries{m}\selectfont%
  \tiny\setbox5\hbox{\vbox{\hsize#1\leftskip\z@\raggedright
  \parindent\z@{\ns#2\par}\vss}}%
  \hbox to 0pt{\hss\tiny$\blacktriangledown$\kern-.085em}%
  \raise2.25ex\hbox to 0pt{\hss\framebox{\box5}}}}}}

\long\def\p@@@tp@stit#1#2#3{\put[0pt,.25\baselineskip]{%
  \hbox{\postitc@lor{\fontfamily{cmr}\fontshape{n}\fontseries{m}\selectfont%
  \tiny\setbox5\hbox{\vbox to #2{\hsize#1\leftskip\z@\raggedright
  \parindent\z@{\ns#3\par}\vss}}%
  \hbox to 0pt{\hss\tiny$\blacktriangledown$\kern-.085em}%
  \raise2.25ex\hbox to 0pt{\hss\framebox{\box5}}}}}}

\def\postitc@lor{\color{postitcolor}}


\def\showpostittrue{\let\postit\postit@}
\def\showpostitfalse{\let\postit\hid@@@}

\long\def\hid@@#1{\ignorespaces}
\def\hid@@@{\@ifnextchar[{\hid@@@@}{\hid@@}}
\long\def\hid@@@@[#1]{\hid@@}

\makeatother



\papersize{216truemm}{279truemm}
\margins{33truemm}{20.25truemm}
\advance\voffset-3truemm
\advance\hoffset3truemm
\advance\footskip0truemm

\frenchspacing
\flushbottom

\headline{\hfill}
\footline{\small\hfill--\kern1ex\thepage\kern1ex--\hfill}

\sectionbeforeskip{1.5\bigskipamount}
\sectionstyle{\centering\normalsize\sc}
\sectionafterskip{\bigskipamount}
\sectionafterindenttrue

\subsectionbeforeskip{\bigskipamount}
\subsectionstyle{\normalsize\bf}
\subsectionafterskip{.5\bigskipamount}
\subsectionafterindenttrue

\paragraphbeforeskip{\medskipamount}
\paragraphstyle{\ft{cmbxsl10}\boldmath}
\paragraphafterskip{1.25ex}
\paragraphindenttrue
\paragraphdot{.}

\abovedisplayskip\smallskipamount
\belowdisplayskip\smallskipamount
\abovedisplayshortskip\smallskipamount
\belowdisplayshortskip\smallskipamount

\lineskiplimit-6pt
\arraycolsep2pt

\parsep0pt
\topsep0pt
\itemsep0pt
\partopsep0pt

\makeatletter
   \c@totalnumber8
   \c@topnumber8
   \c@bottomnumber8

\makeatother

\textfloatsep\floatsep

\proofingfalse
\autolabelfalse

\showlabelsfalse
\showcitationstrue
\showreferencestrue


\showfigurestrue
\showdiagramstrue


\newtheorem{stat}{\statname}  \unnumbered{stat}

\newtheorem{nstat}{\nstatname}[section]
\newenvironment{numberedstatement}[1]{\def\nstatname{#1}\begin{nstat}}{\end{nstat}}

\newtheorem[{\ns}{}]{definition}[nstat]{Definition}
\newtheorem{lemma}[nstat]{Lemma}

\newtheorem{theorem}[nstat]{Theorem}
\newtheorem{corollary}[nstat]{Corollary}

\newtheorem{question}[nstat]{Question}
\newtheorem[{\ns}{}]{exercise}[nstat]{Exercise}
\newtheorem[{\ns}{}]{example}[nstat]{Example}
\newtheorem[{\ns}{}]{remark}[nstat]{Remark}

\let\ns\normalshape




\captionstyle{}
\captionintrostyle{}




\showfigurestrue

\sectionbeforeskip{1.5\bigskipamount}
\sectionstyle{\centering\normalsize\bf}
\sectionafterskip{\bigskipamount}
\sectionafterindenttrue

\subsectionbeforeskip{\bigskipamount}
\subsectionstyle{\normalsize\bf}
\subsectionafterskip{.5\bigskipamount}
\subsectionafterindenttrue

\paragraphbeforeskip{\bigskipamount}
\paragraphstyle{\centering\normalsize\sl}
\paragraphafterskip{.5\bigskipamount}
\paragraphafternewlinetrue
\paragraphafterindenttrue
\paragraphdot{}

\statementindenttrue
\statementintrostyle{\sc}
\renewtheorem[{\ns}{}]{remark}[nstat]{Remark}
\renewtheorem[{\ns}{}]{definition}[nstat]{Definition}

\captionstyle{\small}
\captionintrostyle{\sc}



\newcommand{\id}{\mathop{\mathrm{id}}\nolimits}
\newcommand{\Cl}{\mathop{\mathrm{Cl}}\nolimits} 
\newcommand{\Int}{\mathop{\mathrm{Int}}\nolimits} 
\newcommand{\Bd}{\partial} 

\newcommand{\End}{\mathop{\mathrm{End}}\nolimits}
\newcommand{\cs}{\mathop{\#}}

\renewcommand{\:}{\,{:}\;}
\newcommand{\CP}{{C\mkern-1.5muP}}
\newcommand{\CPbar}{{\vphantom{CP}\smash{\widebar{C\mkern-1.5muP}}}}
\newcommand{\simtimes}{\mathbin{\widetilde{\smash{\times}}}}
\newcommand{\ks}{\mathop{\mathrm{ks}}\nolimits}

\def\(#1\){$(${\sl #1}\/$)$}

\let\emptyset\varemptyset


\def\emph#1{{\sl #1}\/}

\begin{document}

\title{\large\bf ON BRANCHED COVERING\\REPRESENTATION OF 4-MANIFOLDS}
\author{
\sc\normalsize Riccardo Piergallini\\
\sl\normalsize Scuola di Scienze e Tecnologie\\[-3pt]
\sl\normalsize Universit\`a di Camerino -- Italy\\
\tt\small riccardo.piergallini@unicam.it
\and
\sc\normalsize Daniele Zuddas\\
\sl\normalsize Lehrstuhl Mathematik VIII\\[-3pt]
\sl\normalsize Universit\"{a}t Bayreuth -- Germany\\
\tt\small zuddas@uni-bayreuth.de
}
\date{}

\vglue-9pt
\maketitle
\vskip-18pt

\begin{abstract}
\baselineskip13.5pt
\medskip

Assuming $M$ to be a connected oriented PL 4-manifold, our main results are the following: (1) if $M$ is compact  with (possibly empty) boundary, there exists a simple branched covering $p \: M \to S^4 - \Int(B^4_1 \cup \dots \cup B^4_n)$, where the $B^4_i$'s are disjoint PL 4-balls, $n \geq 0$ is the number of boundary components of $M$; (2) if $M$ is open, there exists a simple branched covering $p \: M \to S^4 - \End M$, where $\End M$ is the end space of $M$ tamely embedded in $S^4$.

In both cases, the degree $d(p)$ and the branching set $B_p$ of $p$ can be assumed to satisfy one of these conditions: (1) $d(p) \mkern5mu{=}\mkern5mu 4$ and $B_p$ is a properly self-transversally immersed locally flat PL surface; (2) $d(p) \mkern5mu{=}\mkern5mu 5$ and $B_p$ is a properly embedded locally flat PL surface. 
In the compact (resp. open) case, by relaxing the assumption on the degree we can have $B^4$ (resp. $R^4$) as the base of the covering.

A crucial technical tool used in all the proofs is a quite delicate cobordism lemma for coverings of $S^3$, which also allows us to obtain a relative version of the branched covering representation of bounded 4-manifolds, where the restriction to the boundary is a given branched covering.

We also define the notion of branched covering between topological manifolds, which extends the usual one in the PL category. In this setting, as an interesting consequence of the above results, we prove that any closed oriented \emph{topological} 4-manifold is a 4-fold branched covering of $S^4$. According to almost-smoothability of 4-manifolds, this branched covering could be wild at a single point.

\medskip\smallskip\noindent
{\sl Keywords}\/: branched coverings, wild branched coverings, 4-manifolds.

\medskip\noindent
{\sl AMS Classification}\/: 57M12, 57M30, 57N13.

\end{abstract}

\section*{Introduction}

In \cite{Mo78}, Montesinos proved that any oriented 4-dimensional 2-handlebody is a 3-fold simple covering of $B^4$ branched over a ribbon surface. In \cite{Pi95}, based on this result and on covering moves for 3-manifolds (see \cite{Pi91}), the first author proved that every closed connected oriented PL 4-mani\-fold $M$ is a four-fold simple covering of $S^4$ branched over an immersed locally flat PL surface, possibly having a finite number of transversal double points. 
Subsequently, Iori and Piergallini \cite{IP02} showed that the double points of the branch set can be removed after stabilizing the covering with an extra fifth sheet, in order to get an embedded locally flat PL surface.
This partially solves Problem 4.113 (A) of Kirby's list \cite{Ki95}, but it is still unknown whether double points of the branch set can be removed without stabilization.

It is then natural to ask whether such results can be generalized to arbitrary compact 4-manifolds with (possibly disconnected) boundary and to open 4-manifolds. Moreover, it is intriguing to explore what we can do in the TOP category, namely for compact topological 4-manifolds.

The aim of the present article is to answer these questions. This can be done in light of the results obtained by Bobtcheva and the first author in \cite{BP12} (see also \cite{BP05}),\break about covering moves relating different branched coverings of $B^4$ having PL homeomorphic covering spaces.

In the PL category, we prove Theorems \ref{bc-cpt/thm} and \ref{bc-cpt-bis/thm} below in the compact case, as well as Theorems \ref{bc-open/thm} and \ref{bc-open-bis/thm} in the open case.
Then, by compactifying coverings, we obtain Theorem \ref{bc-top/thm}, which provides a similar representation result for \emph{topological} 4-manifolds in terms of (possibly wild) topological branched coverings, according to Definitions \ref{bc-top/def} and  \ref{bc-wild/def}.

These results were inspired by Guido Pollini's PhD thesis \cite{Po07}, written under the advise of the first author. We are grateful to Guido for his contribution.

A key ingredient in our arguments is the fact that, for any two $d$-fold simple coverings $p_0,p_1 \: M \to S^3$ branched over links, with $d \geq 4$, there exists a $d$-fold simple cobordism covering $p \: M \times [0,1] \to S^3 \times [0,1]$ branched over a self-transversally immersed (embedded for $d \geq 5$) locally flat PL surface, whose restrictions over $S^3 \times \{0\}$ and $S^3 \times \{1\}$ coincide with $p_0 \times \id_{\{0\}}$ and $p_1 \times \id_{\{1\}}$, respectively, provided $p_0$ and $p_1$ are \emph{ribbon fillable}, a technical condition explained in Definition \ref{ribbon-ext/def}.

The existence of such cobordism branched covering follows as a special case of Theorem \ref{bc-cpt/thm} and it is used in the proofs of Theorems \ref{bc-cpt-bis/thm}, \ref{bc-open/thm} and \ref{bc-open-bis/thm}. On the other hand, the proof of Theorem \ref{bc-cpt/thm} depends on the weaker version of the above cobordism property represented by Lemma \ref{concordance/thm}, in which the restriction of $p$ over $S^3 \times \{1\}$ is only PL equivalent but not necessarily equal to $p_1 \times \id_{\{1\}}$.

In \cite{PZ17-2} we use Theorem \ref{bc-cpt/thm} to characterize the PL 4-ma\-nifolds that are branched coverings of one of the following manifolds: $\CP^2$, $\CPbar^2$, $S^2 \times S^2$, $S^2 \simtimes S^2$, or $S^3 \times S^1$. Therein, we derive also representation results for submanifolds as branched coverings of standard submanifolds of such basic 4-manifolds.

We will always adopt the PL point of view if not differently stated, referring to the book of Rourke and Sanderson \cite{RS72} for the basic definitions and facts concerning PL topology. However, all our results in the PL category also have a smooth counterpart, being PL $=$ DIFF in dimension four.

\section{Definitions and results in the PL category\label{stats/sec}}

We recall that a \emph{branched covering} $M \to N$ between compact PL manifolds is defined as a non-degenerate PL map that restricts to a (finite degree) ordinary covering over the complement of a codimension two closed subpolyhedron of $N$.\break This is the usual specialization to compact PL manifolds of the very general topological notion of branched covering introduced by Fox in his celebrated paper \cite{Fo57} (see also Montesinos \cite{Mo05}).

First of all, we extend the above definition to non-compact PL manifolds. In doing so, we also remove the finiteness assumption on the degree. This will be useful in Theorem \ref{bc-open-bis/thm}, where we need infinitely many sheets.

\begin{definition}\label{bc-pl/def}
We call a non-degenerate PL map $p \: M \to N$ between PL\break $m$-manifolds with (possibly empty) boundary a $d$-fold \emph{branched covering}, provided the following two properties are satisfied: (1) every $y \in N$ has a compact connected neighborhood $C \subset N$ such that all the connected components of $p^{-1}(C)$ are compact; (2) the restriction $p_| \: M - p^{-1}(B_p) \to N - B_p$ over the complement of an $(m-2)$-di\-mensional closed subpolyhedron $B_p \subset N$ is an ordinary covering of degree $d \leq \infty\,$.
\end{definition}

More precisely, by $B_p$ we denote the minimal subpolyhedron of $N$ satisfying property (2), which is homogeneously $(m-2)$-dimensional, that is each top cell of it has dimension $m-2$. This is unique and is called the \emph{branch set} of the branched covering $p$.
The degree $d = d(p)$ coincides with the maximum cardinality of the fibers $p^{-1}(y)$ with $y \in N$ and it is called the \emph{degree} of the branched covering $p$. In fact, when\break $d(p)$ is finite, then $y \in B_p$ if and only if $p^{-1}(y)$ has cardinality less than $d(p)$. 

We remark that property (1) in the above definition implies (and, in our situation, it is equivalent to) the completeness of $p$ in the sense of Fox \cite{Fo57}, therefore $p$ is the Fox completion of its restriction $p_| \: M - p^{-1}(B_p) \to N$ (cf. Montesinos \cite{Mo05}). As such, $p$ is completely determined, up to PL homeomorphisms, by the inclusion $B_p \subset N$ and by the ordinary covering $p_| \: M - p^{-1}(B_p) \to N - B_p$, or equivalently, by the associated \emph{monodromy} homomorphism $\omega_p \: \pi_1(N - B_p) \to \Sigma_{d(p)}$.
Finally, $p$ is called a \emph{simple} branched covering if the monodromy $\omega_p(\mu)$ of any meridian $\mu \in \pi_1(N - B_p)$ around $B_p$ is a transposition (in general, it decomposes into disjoint cycles of finite order). We recall that \emph{meridians} \label{meridians} around $B_p$ are only defined at the locally flat points of $B_p$, as the loops obtained by a concatenation of the form $\mu = aca^{-1}$, where $c$ is a loop parametrizing the boundary of a small locally flat PL disk transversal to $B_p$ and $a$ is a path from the base point of $N - B_p$ to the base point of $c$.

In the special case when $N$ is simply connected, the group $\pi_1(N - B_p)$ is generated by a suitable set of meridians, such as a Hurwitz system in dimension 2 or a Wirtinger set of generators in dimension 3 and 4, and the monodromy can be encoded by labeling (a diagram of) $B_p$ with the transpositions corresponding to these meridians.

According to the above definitions and notations, we collect the results mentioned in the introduction in the following statement.

\begin{theorem}[\ns\cite{Pi95,IP02}]\label{bc-clo/thm}
Every closed connected oriented PL $4$-manifold $M$ can be represented by a simple branched covering $p \: M \to S^4$, with degree $d(p)$ and branch set $B_p \subset S^4$ satisfying one of the following conditions:
\begin{enumerate}
\item[\(\rlap{a}{\phantom{b}}\)] \emph{$d(p) \mkern5mu{=}\mkern5mu 4$ and $B_p$ is a self-transversally immersed locally flat PL surface;}
\item[\(b\)] \emph{$d(p) \mkern5mu{=}\mkern5mu 5$ and $B_p$ is an embedded locally flat PL surface.}
\end{enumerate}
\vskip-\lastskip\vskip0pt
\end{theorem}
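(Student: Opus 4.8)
The plan is to reduce the statement to Montesinos' branched covering representation of $4$-dimensional $2$-handlebodies \cite{Mo78}, then to glue two such representations along their common boundary using the covering moves for $3$-manifolds of \cite{Pi91}; part~(b) is obtained from part~(a) by a local desingularization of the branch surface at the cost of an extra sheet.

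First I would split $M$. Fixing a handle decomposition of $M$ and letting $W_1$ be the union of the handles of index $\leq 2$, the complement $W_2 = \Cl(M - W_1)$ consists of the handles of index $\geq 2$ and hence, read upside down, is again an oriented $4$-dimensional $2$-handlebody. Thus $M = W_1 \cup_N W_2$, where $N = \Bd W_1 = \Bd W_2$ is a closed oriented $3$-manifold. By \cite{Mo78}, each $W_i$ is a $3$-fold simple branched covering $p_i \colon W_i \to B^4$ with branch set a ribbon surface $F_i \subset B^4$, properly embedded and locally flat; restriction to the boundary yields $3$-fold simple branched coverings $q_i \colon N \to S^3$ branched over the links $\Bd F_i$.

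The heart of the argument is to make $q_1$ and $q_2$ compatible, so that the $p_i$ can be glued. Two simple branched coverings of $S^3$ representing the same closed $3$-manifold $N$ need not be equivalent, but they are related by a finite sequence of covering moves \cite{Pi91}. The key technical step is that, after passing to degree $4$, each such move is the boundary restriction of a $4$-fold simple branched covering $N \times [0,1] \to S^3 \times [0,1]$ whose branch set is a properly immersed locally flat PL surface with finitely many transversal double points; concatenating these cobordisms produces a $4$-fold simple branched covering $C \colon N \times [0,1] \to S^3 \times [0,1]$ restricting to the degree-$4$ stabilizations of $q_1$ and $q_2$ on the two ends. Stabilizing $p_1$ and $p_2$ to degree $4$ as well, the map $p = p_1 \cup C \cup p_2$ from $W_1 \cup (N \times [0,1]) \cup W_2$ onto $B^4 \cup (S^3 \times [0,1]) \cup B^4 = S^4$ is a $4$-fold simple branched covering of $S^4$ whose total space is homeomorphic to $M$; its branch set, the union of $F_1$, the trace of the moves, and $F_2$, is a self-transversally immersed locally flat PL surface with finitely many transversal double points. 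This proves~(a).

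For~(b) one removes these double points at the cost of a fifth sheet, following \cite{IP02}. Near a transversal double point of $B_p$ the covering has a standard local model (two transverse branch sheets, with two ``interacting'' transpositions as meridian monodromies); one adds a trivial fifth sheet over a small $4$-ball and performs a local modification of the branch surface and of the monodromy that replaces the two transverse sheets by a single embedded locally flat sheet, leaving $M$ and the simplicity of the covering unchanged. Performing this at every double point yields a degree-$5$ simple branched covering with properly embedded locally flat branch surface. The main obstacle is the core step of the third paragraph: constructing, for each covering move of \cite{Pi91}, an explicit $4$-dimensional branched cobordism realizing it with controlled branch set — it is precisely this extension that forces the passage from degree $3$ to degree $4$ and that introduces the transversal double points. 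A secondary difficulty is verifying in~(b) that the local fifth-sheet modification is compatible with the global monodromy and does not alter the covering space.
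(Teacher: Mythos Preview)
The paper does not give its own proof of this theorem: it is stated with the attribution \cite{Pi95,IP02} and used as a black box, with only the one-paragraph sketch in the Introduction (``based on this result and on covering moves for 3-manifolds \dots'') indicating the strategy. Your outline matches that sketch and the original argument in \cite{Pi95,IP02} essentially verbatim: split $M$ into two $2$-handlebodies, apply Montesinos, connect the two boundary coverings via moves realized as a branched cobordism over $S^3\times[0,1]$, then desingularize with a fifth sheet.

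One point deserves sharpening. You locate the need for the fourth sheet in ``constructing, for each covering move, an explicit $4$-dimensional branched cobordism \dots\ with controlled branch set''. In fact the cobordism exists already in degree~$3$; what happens is that each Montesinos move $M_1$ produces a \emph{cusp} singularity (the cone on a trefoil) in the branch surface, and the $M_2$ moves produce the transversal double points. The cusps come in opposite pairs, and it is the cancellation of these cusp pairs that requires the fourth sheet in \cite{Pi95}; the double points survive to degree~$4$ and are what \cite{IP02} removes with the fifth sheet (in pairs, via a global stabilization rather than a local ``fifth sheet over a small $4$-ball''). The paper's proof of Lemma~\ref{concordance/thm} spells out exactly this mechanism, and you may want to consult it to make the middle step precise.
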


Next theorems represent the extensions of the previous one to bounded and open 4-manifolds, respectively, that we will prove in this paper. In order to state and prove them, we recall the notion of ribbon surface in $B^4$ and introduce the ribbon fillability property for simple branched coverings of $S^3$.

A properly embedded PL surface $S \subset B^4$ is a \emph{ribbon surface} if and only if it can be realized by pushing inside $B^4$ the interior of a PL immersed surface $S' \subset S^3 = \Bd B^4$, whose only self-intersections consist of transversal double arcs like the one depicted in Figure \ref{doublearc/fig}. Up to PL isotopy of ribbon surfaces in $B^4$ the surface $S$ is uniquely determined by the surface $S'$, which is called the 3-dimensional diagram of $S$, and in the Figures we will always draw the latter to represent the former.

\begin{Figure}[htb]{doublearc/fig}
\fig{}{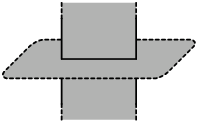}
    {}{A self-intersection arc in the diagram of a ribbon surface.}
\vskip-3pt
\end{Figure}

\begin{definition}\label{ribbon-ext/def}
A simple branched covering $p \: M \to S^3$ is defined to be {\sl ribbon fillable} if it can be extended to a simple branched covering $q \: W \to B^4$ whose branch set $B_q \subset B^4$ is a ribbon surface (which immediately implies that $M = \Bd W$, $B_p = \Bd B_q \subset  S^3$ is a link, and $d(p) = d(q)$). For the sake of convenience, we also call ribbon fillable any simple branched covering $p \: M \to S^3_1 \cup \dots \cup S^3_n$\break that is a disjoint union of ribbon fillable coverings.
\end{definition}

We observe that the above definition is invariant under equivalence of $p$ up to PL homeomorphisms. Hence, ribbon fillability of $p\:M \to S^3$ can be expressed in terms of the labeled branch set $B_p$ by requiring that it is a labeled link in $S^3$ bounding a labeled ribbon surface in $B^4$.

When using simple branched coverings of $S^3$ to represent closed connected oriented 3-manifolds, ribbon fillability arises quite naturally and it is not so restrictive. In fact, it is satisfied by any branched covering representation of such a 3-manifold derived from an integral surgery description of it by the procedure given in Montesinos \cite{Mo78} (cf. also Edmonds \cite{Ed78}) or by the more effective one provided in Bobtcheva and Piergallini \cite{BP05,BP12} (see Section \ref{proofs/sec} below).

\begin{theorem}\label{bc-cpt/thm}
Every compact connected oriented PL $4$-manifold $M$ with $n$ boundary components can be represented by a simple branched covering $p \: M \to S^4 - \Int (B^4_1 \cup \dots \cup B^4_n)$ satisfying property \(a\) or \(b\) as in Theorem \ref{bc-clo/thm}, with the $B^4_i$'s pairwise disjoint standard PL $4$-balls in $S^4$ and $B_p$ a bounded surface properly immersed or embedded in $S^4 - \Int (B^4_1 \cup \dots \cup B^4_n)$. Moreover, the restriction of $p$ to the boundary can be required to coincide with any given ribbon fillable simple branched covering $b \: \Bd M \to \Bd B^4_1 \cup \dots \cup \Bd B^4_n$ with $d(b) = d(p)$.
\end{theorem}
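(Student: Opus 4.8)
The plan is to reduce the bounded case to the known closed case of Theorem~\ref{bc-clo/thm} by a ``capping off and modifying'' strategy, combined with the relative version of Montesinos's handlebody result. Given a compact connected oriented PL $4$-manifold $M$ with boundary components $N_1,\dots,N_n$, I would first build a closed $4$-manifold $\widehat M$ by gluing to $\partial M$ the mapping cylinders (or suitable $4$-dimensional fillings) dictated by the given boundary data $b\:\partial M\to\bigsqcup_i\partial B^4_i$. Concretely, since $b$ is assumed to bound a simple covering of $\bigsqcup_i B^4_i$ branched over ribbon surfaces $\bigsqcup_i R_i$, there is a compact $4$-manifold $W$ with a simple branched covering $q\:W\to\bigsqcup_i B^4_i$ restricting to $b$ on $\partial_- W=\partial M$; glue $W$ to $M$ along $\partial M$ via $\id$ to obtain $\widehat M = M\cup_{\partial M} W$, a closed connected oriented PL $4$-manifold.

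Next I would apply Theorem~\ref{bc-clo/thm} to $\widehat M$ to get a simple branched covering $\widehat p\:\widehat M\to S^4$ with $d(\widehat p)\in\{4,5\}$ and branch set a locally flat PL surface (immersed with only transversal double points, or embedded). The key technical step — and the one I expect to be the main obstacle — is to arrange that, after covering moves in the style of \cite{Pi91,Pi95}, the restriction of $\widehat p$ over the sub-$4$-balls $B^4_1,\dots,B^4_n\subset S^4$ coincides (up to PL homeomorphism) with the prescribed $q\:W\to\bigsqcup_i B^4_i$, so that $\widehat p^{-1}(B^4_i)=W$ componentwise and $p:=\widehat p|\:M=\widehat M - \widehat p^{-1}(\Int\bigsqcup_i B^4_i)\to S^4-\Int\bigsqcup_i B^4_i$ is the desired covering. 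This requires a relative form of the covering-move calculus: one must show that any two simple branched coverings of $\bigsqcup_i B^4_i$ branched over ribbon surfaces with the same boundary monodromy $b$ become equivalent after stabilization and covering moves supported away from the boundary, and that these moves extend over the complementary $4$-ball $S^4-\Int\bigsqcup_i B^4_i$ without disturbing the already-achieved structure there. Montesinos's $3$-fold result \cite{Mo78} and its stabilizations provide the needed normal form for coverings of $B^4$ branched over ribbon surfaces; the first author's covering-move techniques \cite{Pi91,Pi95,IP02} provide the moves; the point is to run them relatively.

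Once the restriction over the $B^4_i$ is matched to $q$, the branch set of $p$ is $B_{\widehat p}\cap(S^4-\Int\bigsqcup_i B^4_i)$, which is a compact PL surface properly immersed (case~\(a)) or properly embedded (case~\(b)) in $S^4-\Int\bigsqcup_i B^4_i$ with boundary $\bigsqcup_i(R_i\cap\partial B^4_i)=\partial R_i$ — and since $R_i$ is a ribbon surface its boundary is a link in $\partial B^4_i$, and local flatness is inherited from the closed case. Simplicity of $p$ is immediate since it is a restriction of the simple covering $\widehat p$. Properties~\(a) and~\(b) hold for $p$ because they hold for $\widehat p$ and are local conditions on the branch surface. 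The degree equality $d(b)=d(p)=d(\widehat p)$ is forced by the construction (it is why we require $d(b)\in\{4,5\}$ implicitly through the hypothesis that $b$ bounds a simple covering extendable to the relevant degree, and we stabilize $b$'s filling if necessary). Finally, if one does not insist on prescribing $b$, the same argument combined with the connected-sum/stabilization tricks of \cite{IP02} lets one further reduce the base to $B^4$ by absorbing all but one of the removed balls, yielding the ``moreover'' reduction mentioned in the abstract; but for the statement as given, matching $\widehat p$ to the prescribed $b$ over the $B^4_i$ via relative covering moves is the crux, and the rest is bookkeeping.
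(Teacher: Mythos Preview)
Your strategy is genuinely different from the paper's, and the step you yourself flag as ``the main obstacle'' is a real gap that you have not closed.

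The paper does \emph{not} pass through the closed case. Instead it builds $p$ directly from a handle decomposition of $M$. For $n=1$ it splits $M=M'\cup M''$ with $M'$ the union of $0$- and $1$-handles (a boundary connected sum of copies of $S^1\times B^3$, hence a standard branched cover of $B^4$) and $M''$ the dualized $2$- and $3$-handles, viewed as a $2$-cobordism from $\Bd M$ to $\Bd M'$. Two purpose-built lemmas do the work: Lemma~\ref{cobordism/thm} covers any oriented $4$-dimensional $2$-cobordism by a simple branched covering of $S^3\times[0,1]$ whose boundary restrictions bound ribbon coverings, and Lemma~\ref{concordance/thm} (a consequence of Theorem~\ref{equiv-boundary/thm}) produces a branched covering of $S^3\times[0,1]$ interpolating between any two such boundary restrictions. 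These pieces are then glued. For $n>1$ one first inserts a $1$-cobordism $W_0$ connecting the boundary components, covers $W_0$ explicitly over $B^4-\Int(B^4_1\cup\dots\cup B^4_n)$ via the ribbon-surface construction of Figure~\ref{csum/fig}, and reduces to the $n=1$ case on $M'=\Cl(M-W_0)$. The prescribed-boundary clause then follows immediately from another application of Lemma~\ref{concordance/thm} in a collar.

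Your route, by contrast, caps off to $\widehat M$ and invokes Theorem~\ref{bc-clo/thm} as a black box. The problem is that the covering $\widehat p\:\widehat M\to S^4$ so produced carries no information about the decomposition $\widehat M=M\cup_{\Bd M}W$: there is no reason any collection of disjoint balls in $S^4$ should have preimage equal to $W$, and even if that happened, no reason the branch set over those balls should be ribbon. The ``relative form of the covering-move calculus'' you appeal to is not in \cite{Pi91,Pi95,IP02}; those moves are global on $S^3$ or $S^4$ and do not come with support control. Establishing such a relative calculus would amount to proving a statement at least as strong as Theorem~\ref{bc-cpt/thm} itself (indeed, the paper's Lemma~\ref{concordance/thm} is precisely the collar-supported statement one needs, and it is proved from Theorem~\ref{equiv-boundary/thm}, not from the closed Theorem~\ref{bc-clo/thm}). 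So your argument is circular at its crux: the missing ingredient is exactly what the paper supplies through Lemmas~\ref{cobordism/thm} and \ref{concordance/thm} and the explicit ribbon construction, rather than through any manipulation of a pre-existing closed covering.
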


In Theorem \ref{bc-cpt/thm}, if the boundary is connected and non-empty, that is $n=1$, we have a simple branched covering $p \: M \to B^4$. By relaxing the constraint on the degree, we can always require that the base of the covering be $B^4$, even if $M$ has more than one boundary component.

\begin{theorem}\label{bc-cpt-bis/thm}
Every compact connected oriented PL $4$-manifold $M$ with\break $n \geq 2$ boundary components is a $3n$-fold simple covering of $B^4$ branched over a properly embedded locally flat PL surface in $B^4$. Moreover, the restriction of the covering to the boundary can be required to coincide with any given $3n$-fold ribbon fillable simple branched covering of $S^3$.
\end{theorem}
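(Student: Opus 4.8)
The strategy is to produce the covering out of one Montesinos three-fold block for each of the $n$ boundary components --- which is what forces the degree $3n$ --- and then to fit these blocks onto the interior of $M$ by the techniques already used in the closed case.

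Write $\Bd M = \Sigma_1 \sqcup \dots \sqcup \Sigma_n$. It is classical (Lickorish--Wallace) that each $\Sigma_i$ bounds an oriented $2$-handlebody $H_i$, and by \cite{Mo78} such a handlebody is a three-fold simple covering of $B^4$ branched over a ribbon surface $R_i \subset B^4$, its boundary restriction being a three-fold simple branched covering $\Sigma_i \to S^3$. Taking the $R_i$'s in disjoint balls of $B^4$ and combining the monodromies on $n$ disjoint blocks of three sheets, one obtains a $3n$-fold simple covering $\bigsqcup_{i=1}^n H_i \to B^4$ branched over a ribbon surface and restricting on $\Bd B^4$ to a $3n$-fold simple branched covering of $S^3$. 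For the ``moreover'' statement one starts instead from the prescribed covering $b \: \Bd M \to S^3$ together with the hypothesised $3n$-fold simple covering $W \to B^4$ that extends it and is branched over a ribbon surface, so $\Bd W = \Bd M$. In both cases the task reduces to the following: \emph{modify the given $3n$-fold simple covering $W \to B^4$, leaving it fixed near $\Bd B^4$, so that its total space becomes $M$ and its branch set becomes a properly embedded locally flat PL surface.}

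Put $Z = M \cup_{\Bd M} \overline W$, a closed connected oriented PL $4$-manifold. The desired $p \: M \to B^4$ is then the same thing as a $3n$-fold simple branched covering $Z \to S^4$ whose restriction over one of the two hemispheres $B^4 \subset S^4$ coincides with $\overline W \to B^4$: cutting $S^4$ along the equatorial $S^3$ and discarding that hemisphere's preimage recovers $p$. So what is needed is a \emph{relative} form of Theorem \ref{bc-clo/thm}, in which a prescribed ribbon-covered piece of the closed manifold comes already equipped with its covering. I would obtain it by running the argument of \cite{Pi95} with this piece held fixed: cover it by the prescribed covering over the first hemisphere; cover a complementary $2$-handlebody part over the second hemisphere by a Montesinos three-fold covering; reconcile the two branched coverings induced on the splitting $3$-manifold between the hemispheres by means of the covering moves of \cite{Pi91}; and finally remove the double points of the branch surface produced by that reconciliation through the stabilization procedure of \cite{IP02}, so as to end up with an embedded locally flat PL branch surface. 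Pushing everything into $S^4$ then completes the construction.

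The delicate point is the control of the degree: both the covering moves of \cite{Pi91} and the double-point resolution of \cite{IP02} normally call for spare sheets, and one must check that they can be carried out within the $3n$ sheets already present and without disturbing the prescribed hemisphere covering. This is exactly where $n \ge 2$ enters, giving $3n \ge 6$ sheets while only the three sheets of a single Montesinos block are active near any one local feature, so that the $3n - 3 \ge 3$ remaining sheets can absorb all the stabilizations and covering moves. Verifying this sheet bookkeeping --- together with the locally flatness and embeddedness of the final branch surface --- is the main obstacle; it is also why the hypothesis $n \ge 2$ cannot be dropped, since for $n = 1$ a lone three-sheeted block leaves no room, and indeed not every compact connected $4$-manifold with connected boundary is a three-fold simple branched covering of $B^4$.
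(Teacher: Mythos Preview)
Your strategy differs from the paper's, and as written it has a gap at the step you yourself flag as delicate.

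The paper never forms the closed double $Z$. Instead it works entirely inside $M$: it carves off a collar $W_0 = (\Bd M \times [0,1]) \cup H^1_1 \cup \dots \cup H^1_{n-1}$ connecting the $n$ boundary components to a single connected $3$-manifold $\Bd M' \cong C_1 \cs \dots \cs C_n$, with $M' = \Cl(M-W_0)$. Over $W_0$ it builds the $3n$-fold covering by hand, assigning sheets $3i-2,3i-1,3i$ to $C_i \times [0,1]$ via the given $3$-fold Montesinos covering and then adding $n-1$ trivial disks labeled $(1\;3i{+}1)$ to realize the $1$-handles. Over $M'$ it uses the already-proved connected-boundary case (Theorem~\ref{bc-cpt/thm} for $n=1$), stabilized to degree $3n$. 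The two pieces are matched along $\Bd M'$ by the concordance Lemma~\ref{concordance/thm}, which applies with \emph{embedded} branch surface since $3n \geq 6 > 5$. The ``moreover'' clause just replaces the product collar covering by $b \times [0,1]$.

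Your plan, by contrast, requires a relative form of Theorem~\ref{bc-clo/thm} for $Z = M \cup_{\Bd M} \overline W$ that restricts to the prescribed $W \to B^4$ over a hemisphere. Your sketch for this --- ``cover a complementary $2$-handlebody part over the second hemisphere by a Montesinos three-fold covering'' --- does not parse: the complement of $W$ in $Z$ is $M$, which is not a $2$-handlebody in general, so Montesinos does not apply to it directly. The argument of \cite{Pi95} decomposes $Z$ as a $2$-handlebody $H$ union its dual $H^*$, and that decomposition need not respect the splitting $Z = W \cup M$; you would have to arrange $W \subset H$ and extend the prescribed ribbon covering of $W$ to one of $H$, then reconcile on $\Bd H$ (which lies inside $M$, not on $\Bd M$). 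This can be done, but it amounts to reproving the concordance Lemma~\ref{concordance/thm} and the $n=1$ case of Theorem~\ref{bc-cpt/thm} in disguise, and your sheet-bookkeeping paragraph does not supply the argument. The paper's route is shorter precisely because it isolates the reconciliation step as Lemma~\ref{concordance/thm} (where the $d \geq 5$ hypothesis cleanly gives an embedded branch surface) and reduces everything else to the connected-boundary case already in hand.
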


For a non-compact manifold $M$, we denote by $\End M$ the \emph{end space} of $M$, that is the inverse limit of the inclusion system of component spaces $C(M - K)$ with $K$ varying on the compact subspaces $K \subset M$ (see Freudenthal \cite{Fr45}). Since $\End M$ is a compact totally disconnected metrizable space, possibly containing a Cantor set, it can be embedded in $R$.

\begin{theorem}\label{bc-open/thm}
Every open connected oriented PL $4$-manifold $M$ can be represented by a simple branched covering  $p \: M \to S^4 - \End M$ satisfying property \(a\) or \(b\) as in Theorem \ref{bc-clo/thm}, with $\End M$ embedded in $S^4$ as a tame totally disconnected subspace (in particular, we can have $\End M \subset S^1 \subset S^4$) and $B_p$ an unbounded surface properly immersed or embedded in $S^4 - \End M$.
\end{theorem}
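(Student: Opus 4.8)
The plan is to exhaust $M$ by compact connected pieces, apply Theorem~\ref{bc-cpt/thm} to each of them and to the cobordisms between consecutive ones, splice the resulting branched coverings along their boundary $3$-spheres, and pass to the limit. Fix once and for all one of the two alternatives \(a), \(b) of Theorem~\ref{bc-clo/thm}, hence a fixed covering degree $d\in\{4,5\}$ throughout. First I would set up the two exhaustions. By the usual successive enlargements — absorbing into each $M_k$ the compact components of its complement, and joining by outside $1$-handles the components of $\Bd M_k$ belonging to the same complementary region — choose compact connected oriented PL $4$-submanifolds $M_1\subset\Int M_2\subset M_2\subset\cdots$ with $\bigcup_kM_k=M$ such that every component $E_{k,i}$ of $M\setminus\Int M_k$ is non-compact with connected boundary and each relative piece $W_{k,i}:=M_{k+1}\cap E_{k,i}$ is connected. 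Then $\Bd M_k$ has exactly $n_k\geq1$ components, in bijection with the component set $C(M\setminus\Int M_k)$; each $W_{k,i}$ is a compact connected oriented PL $4$-manifold with $1+r_i$ boundary components, namely the cobordism from $\Bd E_{k,i}$ to the $r_i\geq1$ pieces $\Bd E_{k+1,j}$ with $E_{k+1,j}\subset E_{k,i}$; moreover $M_{k+1}=M_k\cup\bigcup_iW_{k,i}$ and $\End M$ is the inverse limit of the finite sets $C(M\setminus\Int M_k)$. On the target side, being compact, totally disconnected and metrizable, $\End M$ embeds in $S^1\subset S^4$ realizing the tree of this inverse system, with a nested sequence $N_1\supset N_2\supset\cdots$ of regular neighborhoods, each $N_k=B_{k,1}\cup\dots\cup B_{k,n_k}$ a disjoint union of PL $4$-balls indexed by $C(M\setminus\Int M_k)$, with $B_{k+1,j}\subset\Int B_{k,i}$ whenever $E_{k+1,j}\subset E_{k,i}$, $N_{k+1}\subset\Int N_k$, and $\bigcap_kN_k=\End M$; this makes $\End M$ tame. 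Setting $X_k:=S^4\setminus\Int N_k$ and $Y_{k,i}:=B_{k,i}\setminus\Int\bigl(\bigcup_jB_{k+1,j}\bigr)$, one has $X_k\subset\Int X_{k+1}$, $\bigcup_kX_k=S^4\setminus\End M$, $X_{k+1}=X_k\cup\bigcup_iY_{k,i}$, $\Bd X_k$ a disjoint union of $3$-spheres, and — since removing one open ball from $S^4$ yields $B^4$ — each $Y_{k,i}$ is PL homeomorphic to $S^4$ with $1+r_i$ disjoint open balls removed.

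Then I would construct, by induction on $k$, simple $d$-fold branched coverings $p_k\:M_k\to X_k$ satisfying \(a) (resp. \(b)) with $p_{k+1}|_{M_k}=p_k$ and $p_{k+1}(W_{k,i})\subset Y_{k,i}$, maintaining the invariant that the restriction $b_{k,i}$ of $p_k$ to $\Bd E_{k,i}\to\Bd B_{k,i}=S^3$ is a simple $d$-fold branched covering of $S^3$ of the surgery-derived type recalled in the Remark after Theorem~\ref{bc-cpt/thm}, hence one that bounds a simple $d$-fold covering of $B^4$ branched over a ribbon surface. For $k=1$, apply Theorem~\ref{bc-cpt/thm} to $M_1$ with prescribed boundary data the surgery-derived degree-$d$ covers $b_{1,i}$ of $S^3$ realizing the closed $3$-manifolds $\Bd E_{1,i}$, obtaining $p_1\:M_1\to S^4\setminus\Int(B_{1,1}\cup\dots\cup B_{1,n_1})=X_1$. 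In the inductive step, apply Theorem~\ref{bc-cpt/thm} to the compact connected oriented $4$-manifold $W_{k,i}$, prescribing on its boundary the already-built $b_{k,i}$ on $\Bd E_{k,i}$ and newly chosen surgery-derived degree-$d$ covers $b_{k+1,j}$ on the $\Bd E_{k+1,j}$ — legitimate since each of these bounds a simple covering of $B^4$ branched over a ribbon surface, so their union bounds one of the disjoint union of $1+r_i$ balls — thereby getting a simple branched covering $q_{k,i}\:W_{k,i}\to S^4\setminus\Int(B'\cup B'_1\cup\dots\cup B'_{r_i})$, which we identify with $Y_{k,i}$ so that the $b_{k,i}$-covered sphere is $\Bd B_{k,i}$. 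Gluing $p_k$ with all the $q_{k,i}$ along $\Bd M_k=\bigcup_i\Bd E_{k,i}$, where they agree after matching the $S^3$-identifications and arranging product form near the gluing spheres, yields (by the standard compatibility of Fox completions under such gluings) a non-degenerate PL map $p_{k+1}\:M_{k+1}\to X_{k+1}$ that is a $d$-fold ordinary covering off $B_{p_{k+1}}=B_{p_k}\cup\bigcup_iB_{q_{k,i}}$, a locally flat PL surface, embedded in case \(b) and self-transversally immersed in case \(a) (the two pieces meeting each $\Bd B_{k,i}$ transversally along the same $1$-submanifold), which is simple and restricts on $\Bd E_{k+1,j}$ to the surgery-derived $b_{k+1,j}$; so the invariant persists.

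Finally, $p\:M\to S^4\setminus\End M$ defined by $p|_{M_k}:=p_k$ is a well-defined non-degenerate PL map. For $y\in S^4\setminus\End M$, a small compact connected neighborhood $C\subset\Int X_{k+1}$ with $k$ large has $p^{-1}(C)=p_{k+1}^{-1}(C)\subset M_{k+1}$ (since the pieces $W_{k',i'}$ with $k'\geq k+1$ map into $N_{k+1}$, which is disjoint from $C$), hence $p^{-1}(C)$ is compact and so are its components, giving property~(1) of Definition~\ref{bc-pl/def}; and off $B_p:=\bigcup_kB_{p_k}$, a properly embedded (resp.\ properly self-transversally immersed) locally flat PL surface in $S^4\setminus\End M$, the map $p$ is the union of the ordinary coverings $p_k$, hence itself a $d$-fold ordinary covering, which is simple because each $p_k$ is. Since $\End M$ lies in $S^1\subset S^4$ as a tame totally disconnected subspace, this is the asserted representation.

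\textbf{Main obstacle.}
The hard part is not any individual gluing but the coherence of the boundary data across the infinitely many gluing $3$-spheres. One must fix the degree $d$ globally and insist that every boundary branched covering be of the surgery-derived type of the Remark after Theorem~\ref{bc-cpt/thm}, so that the \emph{same} covering can simultaneously be \emph{prescribed} as the incoming boundary of the next cobordism's covering — making the ribbon-bounding hypothesis of Theorem~\ref{bc-cpt/thm} available — and be \emph{guaranteed} to reappear, on the outgoing pieces, in that same type. This must be combined with an exhaustion chosen carefully enough — connected relative pieces, connected complementary boundaries, no compact complementary components, product collars at every gluing sphere, and the inverse system of complementary components faithfully matched by the chosen tree of balls in $S^4$ — so that the spliced branch set is globally a single properly immersed (resp.\ embedded) locally flat surface with only the singularities permitted by \(a).
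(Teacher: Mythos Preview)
Your proposal is correct and follows essentially the same approach as the paper: exhaust $M$ by compact connected pieces whose relative cobordisms form a tree, build a matching nested system of PL $4$-balls in $S^4$ whose intersection is a tame copy of $\End M$ (inside $S^1$), fix on each gluing $3$-sphere a surgery-derived (hence ribbon-bounding) simple branched covering of the required degree, and apply Theorem~\ref{bc-cpt/thm} with prescribed boundary to each piece before gluing. The only organizational difference is that the paper fixes all the boundary coverings $p_e$ at once (one per edge of the tree) and then fills in every vertex piece independently, whereas you build the coverings $p_k$ inductively level by level; this is immaterial.
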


In the special case when $M$ has only one end, that is $\End M$ consists of a single point, then Theorem \ref{bc-open/thm} tells us that $M$ is a simple branched covering of $R^4$.\break As a direct consequence we have the following corollary.

\begin{corollary}\label{exoticR4/thm}
For every exotic $R^4_{\mathrm{ex}}$ there is a simple branched covering $p \: R^4_{\mathrm{ex}} \to R^4$ to the standard $R^4$, satisfying property \(a\) or \(b\) as in Theorem~\ref{bc-clo/thm}.
\end{corollary}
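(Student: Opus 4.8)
The plan is to obtain the statement as an immediate specialization of Theorem~\ref{bc-open/thm}, which we may take as given. First I would recall that an exotic $R^4_{\mathrm{ex}}$ is, by definition, an open connected oriented smooth (equivalently PL, since PL $=$ DIFF in dimension four) $4$-manifold homeomorphic to the standard $R^4$. Since the end space $\End M$ is the inverse limit of the component spaces $C(M-K)$ over the compact subspaces $K \subset M$, it depends only on the underlying topological space of $M$; hence $\End R^4_{\mathrm{ex}}$ is a single point, exactly as $\End R^4$ is.

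Next I would apply Theorem~\ref{bc-open/thm} to $M = R^4_{\mathrm{ex}}$. It yields a simple branched covering $p \: R^4_{\mathrm{ex}} \to S^4 - \End M$ satisfying property \(a) or \(b) of Theorem~\ref{bc-clo/thm}, with $\End M$ a single point tamely embedded in $S^4$ and with $B_p$ an unbounded locally flat PL surface in the base $S^4 - \End M$, properly immersed in case \(a) and properly embedded in case \(b). It then remains only to identify this base with the standard $R^4$. A single point of the standard $4$-sphere is automatically PL-tame, so it has a standard PL ball neighborhood; by PL homogeneity of $S^4$ we may take it to be the pole from which a stereographic projection is performed, and such a projection identifies $S^4 - \End M$ with the standard $R^4$ by a PL homeomorphism. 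Composing $p$ with this identification produces the desired simple branched covering $R^4_{\mathrm{ex}} \to R^4$, still satisfying \(a) or \(b), with $B_p$ of the stated type.

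Since Theorem~\ref{bc-open/thm} is available, this corollary carries essentially no obstacle of its own: all the difficulty lies inside Theorem~\ref{bc-open/thm}, and the only remaining ingredient is the elementary fact that the standard $4$-sphere with one point removed is PL homeomorphic to the standard $R^4$. If one wished to pinpoint a subtlety, it would be that one must know the deleted end point sits tamely in $S^4$, so that the base comes out as the \emph{standard} $R^4$ rather than, say, an exotic one; for a single point this holds automatically in the PL category.
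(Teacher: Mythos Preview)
Your argument is correct and matches the paper's approach exactly: the paper states this corollary as a direct consequence of Theorem~\ref{bc-open/thm}, observing that when $M$ has a single end the base $S^4 - \End M$ is just $R^4$. You have simply spelled out the details the paper leaves implicit.
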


In the same spirit of Theorem \ref{bc-cpt-bis/thm}, we have a similar result for open 4-manifolds. Namely, by relaxing the constraint on the degree as above, we can always require that the base of the covering is $R^4$, even if $M$ has more than one end.

\begin{theorem}\label{bc-open-bis/thm}
Every open connected oriented PL $4$-manifold $M$ with more than one end is a $3n$-fold simple covering of $R^4$ branched over a properly embedded locally flat PL surface in $R^4$, with $n = \min\{\aleph_0,\left|\End M\right|\}$.
\end{theorem}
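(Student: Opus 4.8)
The plan is to prove Theorem~\ref{bc-open-bis/thm} by a layered construction over a decomposition of $R^4$, paralleling the passage from Theorem~\ref{bc-clo/thm} to Theorem~\ref{bc-cpt/thm} but with Theorem~\ref{bc-cpt-bis/thm} in place of Theorem~\ref{bc-cpt/thm}. First I would fix a PL exhaustion $M_0 \subset M_1 \subset \cdots$ of $M$ by compact connected $4$-submanifolds with $M_{k-1} \subset \Int M_k$ and $\bigcup_k M_k = M$, chosen so that $M - \Int M_0$ splits into open ends $E_1, E_2, \dots$, one near each point of $\End M$, and so that for $k \geq 1$ the compact cobordism $N_k := M_k - \Int M_{k-1}$ is a disjoint union $\bigsqcup_i N_k^i$ with $N_k^i \subset E_i$ joining the closed oriented $3$-manifolds $\partial_i M_{k-1}$ and $\partial_i M_k$. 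By the procedures recalled after Theorem~\ref{bc-cpt/thm} (cf.\ \cite{Mo78,BP05,BP12}), every closed oriented $3$-manifold admits a $3$-fold simple branched covering onto $S^3$ bounding a simple covering of $B^4$ branched over a ribbon surface; fix such coverings for all the $\partial_i M_k$. The key tool I would need is a relative form of Theorem~\ref{bc-cpt-bis/thm}, obtained by the same methods (splitting a cobordism into a relative $2$-handlebody and the turned-over dual of one, covering each piece by the relative Montesinos construction, and gluing): \emph{a compact oriented cobordism between two closed oriented $3$-manifolds, each presented as a $3$-fold simple branched covering of $S^3$, is a $3$-fold simple branched covering of $S^3 \times [0,1]$ over a properly embedded locally flat PL surface, restricting over $S^3 \times \{0\}$ and over $S^3 \times \{1\}$ to the given coverings}; this is the analogue of the boundary-prescription (``moreover'') clauses, now with both boundary restrictions prescribed.

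Assume first that $n = |\End M|$ is finite, so $n \geq 2$ and each $M_k$ has $n$ boundary components. I would decompose $R^4 = D_0 \cup C_1 \cup C_2 \cup \cdots$ with $D_0 = B^4$ and $C_k \cong S^3 \times [0,1]$ glued consecutively along $3$-spheres. Over $D_0$ put the $3n$-fold simple branched covering $p_0 \: M_0 \to B^4$ of Theorem~\ref{bc-cpt-bis/thm}, restricting on $\partial M_0$ to the disjoint union of the fixed coverings $\partial_i M_0 \to S^3$; over each $C_k$ put the disjoint union of the $n$ relative coverings $N_k^i \to S^3 \times [0,1]$, so that $p_k \: N_k \to C_k$ has degree $3n$ and agrees over $S^3 \times \{0\}$ with the covering already placed on $\partial M_{k-1}$. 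Choosing all branch surfaces cylindrical near the gluing $3$-spheres, the maps $p_k$ glue to a PL map $p \: M \to R^4$, which is a $3n$-fold simple branched covering in the sense of Definition~\ref{bc-pl/def}: property~(1) holds because every point of $R^4$ has a small ball neighbourhood whose preimage has at most $3n$ components, each compact; and $p$ is an ordinary $3n$-fold covering over the complement of $B_p = \bigcup_k B_{p_k}$, a properly embedded locally flat PL surface, being a locally finite union of compact surfaces matched along links. Using the immersed, resp.\ the embedded, version of Theorem~\ref{bc-cpt-bis/thm} and its relative form yields property~\(a), resp.\ property~\(b).

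If $|\End M| \geq \aleph_0$, so $n = \aleph_0$, the numbers $n_k$ of boundary components of $M_k$ grow without bound, so the degree of $p$ must be $\aleph_0$; and since $B_p$ has codimension two this degree is constant over $R^4$, so no radial exhaustion of $R^4$ can have the $M_k$ as preimages. I would still work over $R^4 = D_0 \cup C_1 \cup C_2 \cup \cdots$ as above, but now with countably many sheets present everywhere and monodromy $\omega_p \: \pi_1(R^4 - B_p) \to \Sigma_{\aleph_0}$ valued in the \emph{finitary} symmetric group: inductively, the boundary components of $M_k$ absent from $M_{k-1}$ are made to correspond to sheets that are trivial over $D_0 \cup C_1 \cup \cdots \cup C_{k-1}$ (contributing there only a disjoint $4$-ball) and are drawn into the branching from the layer $C_k$ on, the relative form of Theorem~\ref{bc-cpt-bis/thm} being applied layer by layer with the fixed $3$-fold boundary covers. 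Finitariness guarantees that finitely many sheets meet over each point, whence $p$ is Fox-complete, Definition~\ref{bc-pl/def}(1) holds, and $B_p$ is again a properly embedded locally flat PL surface.

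The main obstacle will be the relative version of Theorem~\ref{bc-cpt-bis/thm}: prescribing a simple branched covering, together with its branch surface, near both ends of a cobordism at once and keeping the degree at $3$ per boundary $3$-manifold, which forces one to carry through the covering-move and ribbon-filling arguments of Section~\ref{proofs/sec} with boundary data fixed on two sides simultaneously. A further difficulty, specific to infinitely many ends, is to organise the inductive activation of new sheets so that the infinite assembly over $R^4$ has exactly $M$ (not a wild enlargement of it) as total space and exactly $\End M$ as end space, and so that property~(1) of Definition~\ref{bc-pl/def} survives the limiting process.
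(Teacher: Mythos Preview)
Your overall scheme---exhaust $M$ by compact pieces, cover each piece over a concentric spherical shell in $R^4$, and glue---matches the paper's. The genuine gap is the ``relative form of Theorem~\ref{bc-cpt-bis/thm}'' you posit: a $3$-fold simple cover of a connected cobordism over $S^3\times[0,1]$ with \emph{both} boundary restrictions prescribed and with properly embedded branch surface. The methods of Section~\ref{proofs/sec} do not deliver this at degree $3$. Lemma~\ref{cobordism/thm} does produce a $3$-fold cover of a $2$-cobordism over $S^3\times[0,1]$ with embedded branch surface, but its boundary restrictions are whatever the construction outputs; to adjust either of them to a prescribed cover one must invoke Lemma~\ref{concordance/thm}, and that step requires $d\geq 4$ to cancel the cusp pairs coming from the Montesinos $M_1$ moves, and $d\geq 5$ to cancel the nodes and get an embedded surface. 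Your ``split into a relative $2$-handlebody and its turned-over dual'' still leaves a concordance in the middle, so the same degree obstruction reappears there.

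The paper sidesteps this by not working component by component at degree $3$. It covers each $W_i$ over the single shell $\Cl(B^4_{i+1}-B^4_i)$ at total degree $3n_i\geq 6$, invoking Theorem~\ref{cobordism-bis/thm} (which packages Theorem~\ref{bc-cpt-bis/thm} together with Lemma~\ref{concordance/thm}) to prescribe the restrictions over both boundary spheres at once; since $3n_i\geq 5$, the embedded version applies. The gluing then stabilizes $\cup_{j<i}\,p_j$ to degree $3n_i$ before attaching $p_i$, which in the infinite-end case is precisely your progressive activation of sheets with finitary monodromy. So your treatment of infinitely many ends is on the right track; what needs revision is the per-component degree bookkeeping---the extra sheets have to be shared across components of each layer, not segregated three to each.
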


The theorems above can be combined in various ways, by including in a single statement different points of view. In particular, we limit ourselves to consider next Theorems \ref{bc-cpt-open/thm} and \ref{cobordism-bis/thm} below. The former includes Theorems \ref{bc-cpt/thm} and \ref{bc-open/thm}, while the latter includes Theorems \ref{bc-cpt/thm} and \ref{bc-cpt-bis/thm}, as well as Lemma \ref{concordance/thm} stated in Section \ref{proofs/sec}. The proofs of these new theorems are nothing else than combinations of the proofs of the constituent ones, so we leave them to the reader.

\begin{theorem}\label{bc-cpt-open/thm}
For every connected oriented PL 4-manifold $M$ with (possibly empty) compact boundary, there exists a simple branched covering $p \: M \to S^4 - \Int(B^4_1 \cup \dots \cup B^4_n) - \End M$ satisfying property \(a\) or \(b\) as in Theorem \ref{bc-clo/thm}, where the $B^4_i$'s are pairwise disjoint locally flat PL 4-balls in $S^4$, $n \geq 0$ is the number of boundary components of $M$, and $\End M$ is the (possibly empty) end space of $M$ tamely embedded in $S^4 - \Int(B^4_1 \cup \dots \cup B^4_n)$.
\end{theorem}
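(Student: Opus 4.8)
The plan is to run the exhaustion argument underlying Theorem~\ref{bc-open/thm} while carrying the compact boundary $\Bd M$ along exactly as in Theorem~\ref{bc-cpt/thm}, so that the two constructions produce, on the one hand, the fixed balls $B^4_1,\dots,B^4_n$ associated with the boundary components and, on the other, a shrinking family of balls converging onto $\End M$. First I would fix an exhaustion $W_1 \subset W_2 \subset \cdots$ of $M$ by compact connected PL $4$-submanifolds with $\Bd M \subset W_i \subset \Int_M W_{i+1}$ (interior taken in $M$), $\bigcup_i W_i = M$, and a collar of $\Bd M$ contained in $W_1$, so that $\Bd W_i = \Bd M \sqcup V_i$ for a closed (possibly empty, possibly disconnected) oriented PL $3$-manifold $V_i$; the exhaustion may be chosen so that the inverse system of the finite sets $\pi_0(V_i)$, with bonding maps induced by the cobordisms $C_i = \Cl(W_{i+1} \setminus W_i)$ from $V_i$ to $V_{i+1}$, has surjective bonding maps and inverse limit canonically homeomorphic to $\End M$ (see \cite{Fr45}). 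Recall moreover that every closed connected oriented PL $3$-manifold $V$ admits a \emph{ribbon-fillable} simple branched covering $V \to S^3$ — one bounding a simple covering of $B^4$ branched over a ribbon surface — as provided, from an integral surgery description of $V$, by the procedures of \cite{Mo78} and \cite{BP05,BP12} recalled in Section~\ref{proofs/sec}; fix such a covering for every component of every $V_i$, and let $b_i \colon V_i \to S^3 \sqcup \cdots \sqcup S^3$ denote the induced simple branched covering of a disjoint union of $3$-spheres, one per component of $V_i$.

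Now I would argue inductively on $i$. For $i = 1$, apply Theorem~\ref{bc-cpt/thm} to $W_1$ with prescribed boundary covering equal to the given $b \colon \Bd M \to \Bd B^4_1 \cup \dots \cup \Bd B^4_n$ over $\Bd M$ and equal to $b_1$ over $V_1$ — this composite is ribbon-fillable, hence admissible — obtaining a simple branched cover $p_1 \colon W_1 \to S^4 - \Int (B^4_1 \cup \dots \cup B^4_n \cup D_1)$ satisfying \(a) or \(b) (as in Theorem~\ref{bc-clo/thm}), where $D_1$ is a disjoint union of standard balls, one per component of $V_1$, disjoint from the $B^4_j$'s. For the inductive step, suppose $p_i \colon W_i \to S^4 - \Int (B^4_1 \cup \dots \cup B^4_n \cup D_i)$ has been constructed with boundary restriction over $V_i$ equal to $b_i$; apply Theorem~\ref{bc-cpt/thm} to each connected component of $C_i$ with prescribed boundary coverings given by $b_i$ over its $V_i$-side and $b_{i+1}$ over its $V_{i+1}$-side, realize the resulting base $S^4$-minus-balls inside $S^4$ so that the balls over the $V_i$-components are precisely the corresponding balls of $D_i$ and the balls over the $V_{i+1}$-components are disjoint sub-balls of them, and glue to $p_i$ along the common $V_i$-spheres. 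Since both sides restrict there to $b_i$, the gluing is legitimate, and it yields $p_{i+1} \colon W_{i+1} \to S^4 - \Int (B^4_1 \cup \dots \cup B^4_n \cup D_{i+1})$, still simple, still satisfying \(a) or \(b), with $D_{i+1} \subset \Int D_i$ a finer disjoint union of balls, one per component of $V_{i+1}$. A preliminary ambient isotopy at each stage lets me keep every ball of $D_i$ of diameter less than $2^{-i}$ and, if desired, all the balls inside a fixed unknotted arc, hence inside $S^1 \subset S^4$.

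Passing to the limit, the $p_i$ agree on overlaps, so $p = \bigcup_i p_i \colon M \to S^4 - \Int (B^4_1 \cup \dots \cup B^4_n) - K$ is a well-defined non-degenerate PL map with $K = \bigcap_i D_i$. Being the intersection of a decreasing sequence of disjoint unions of balls of diameters tending to $0$, and realizing the inverse system $\{\pi_0(V_i)\}$, the set $K$ is a tame totally disconnected compact subset of $S^4$ canonically homeomorphic to $\End M$ and contained in $S^1 \subset S^4$. Property~(1) of Definition~\ref{bc-pl/def} holds because a point of $K$ has arbitrarily small ball neighbourhoods over which $p$ restricts to finitely many copies of an open piece of some $p_i$, while a point outside $K$ already lies in some $S^4 - \Int (B^4_1 \cup \dots \cup B^4_n \cup D_i)$; simplicity and property \(a) (resp. \(b)) are inherited from the $p_i$; and the branch set $B_p = \bigcup_i B_{p_i}$ is an unbounded properly immersed (resp. embedded) locally flat PL surface in the base, properly because only finitely many of the $B_{p_i}$ meet any given compact subset. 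Specializing, $\End M = \emptyset$ recovers Theorem~\ref{bc-cpt/thm} and $\Bd M = \emptyset$ recovers Theorem~\ref{bc-open/thm}, so this is indeed a common extension of both.

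I expect essentially all the substance to be already contained in Theorems~\ref{bc-cpt/thm} and~\ref{bc-open/thm}; the point that needs care is that at every stage the boundary covering over $V_i$ inherited from $p_i$ must coincide with the ribbon-fillable covering $b_i$ needed to attach the next cobordism $C_i$. This is handled exactly as in the proof of Theorem~\ref{bc-open/thm}: one fixes the $b_i$ in advance and invokes the boundary-prescription clause of Theorem~\ref{bc-cpt/thm} on both the $V_i$ side and the $V_{i+1}$ side of each component of $C_i$, using covering moves to reconcile the two if necessary. The only genuinely new bookkeeping relative to the constituent theorems is to run the boundary construction for $\Bd M$ (the fixed balls $B^4_j$) and the end construction (the shrinking balls $D_i$) in parallel, which causes no conflict since the two families of balls stay disjoint throughout.
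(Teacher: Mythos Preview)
Your proposal is correct and is precisely the combination of the proofs of Theorems~\ref{bc-cpt/thm} and~\ref{bc-open/thm} that the paper explicitly leaves to the reader. The only minor slip is that you refer to ``the given $b \colon \Bd M \to \Bd B^4_1 \cup \dots \cup \Bd B^4_n$'' when no such $b$ is prescribed in the statement of Theorem~\ref{bc-cpt-open/thm}; you should simply fix one ribbon-fillable $b$ at the outset, just as you do for the $b_i$'s.
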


\begin{theorem}\label{cobordism-bis/thm}
Let $M$ be a compact connected oriented PL 4-manifold with boundary and $b \: \Bd M \to S^3_1 \cup \dots \cup S^3_k$ be a $d$-fold ribbon fillable simple branched covering over a disjoint union of 3-spheres, with $k \geq 1$ and $d \geq 4$ (resp. $d \geq 5$). Then $b$ can be extended to a $d$-fold simple branched covering $p \: M \to S^4 - \Int(B^4_1 \cup \dots \cup B^4_k)$, whose branch set $B_p$ is a properly self-transversally immersed (resp. a properly embedded) locally flat PL surface.
\end{theorem}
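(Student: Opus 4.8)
The plan is to combine the proofs of Theorems~\ref{bc-cpt/thm} and~\ref{bc-cpt-bis/thm} with that of Lemma~\ref{concordance/thm}; the only genuinely new ingredient will be the control of the degree along $\Bd M$. First I would rephrase the statement in closed 4-manifold terms. Write $W = S^4 - \Int(B^4_1 \cup \dots \cup B^4_k)$, so that $S^4 = W \cup (B^4_1 \cup \dots \cup B^4_k)$. By hypothesis $b$ extends to a simple covering $\hat q \: \hat N \to B^4_1 \cup \dots \cup B^4_k$ branched over a disjoint union of ribbon surfaces, and then $\hat N$ is a compact 4-dimensional 2-handlebody (possibly disconnected) with $\Bd \hat N = \Bd M$ and $\hat q|_{\Bd \hat N} = b$. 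Gluing $M$ to $\hat N$ along their common boundary produces a closed connected oriented PL 4-manifold $X = M \cup_{\Bd M} \hat N$, and extending $b$ to the wanted $p$ is the same as producing a $d$-fold simple branched covering $\bar p \: X \to S^4$ with $\bar p^{-1}(B^4_1 \cup \dots \cup B^4_k) = \hat N$, $\bar p|_{\hat N} = \hat q$ and $B_{\bar p}$ self-transversally immersed (resp.\ embedded) and locally flat: one sets $p = \bar p|_M$, and conversely $\bar p = p \cup \hat q$, noting that then $p|_{\Bd M} = \hat q|_{\Bd \hat N} = b$ automatically.

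For $d = 4$ (resp.\ $d = 5$) this is essentially what the construction in the proof of Theorem~\ref{bc-cpt/thm} already produces; the only cosmetic difference is that there one removes exactly one ball per boundary component of $M$, whereas here several components of $\Bd M$ may cover one and the same $S^3_i$. That construction should adapt verbatim, since it amounts only to allowing non-transitive pieces of monodromy over the $B^4_i$'s and to feeding it, for each $i$, a covering of $B^4_i$ branched over ribbon surfaces with the prescribed restriction --- such a covering exists by~\cite{Mo78} applied to a surgery presentation of the components of $\Bd M$ lying over $S^3_i$. So assume $d > 4$ (resp.\ $d > 5$) and set $d_0 = 4$ (resp.\ $d_0 = 5$). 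I would first fix a $d_0$-fold simple branched covering $p_0 \: M \to W$ of the above type whose boundary restriction $\beta_0 := p_0|_{\Bd M}$ bounds a simple covering of $B^4_1 \cup \dots \cup B^4_k$ branched over ribbon surfaces (choose $\beta_0$ first, as just explained, then apply the $d = d_0$ case). Then I would perform $d - d_0$ trivial stabilizations of $p_0$, each supported inside a fixed collar of $\Bd W$ in $W$ and adding to the branch set a small trivial properly embedded disk carrying a new transposition. Since the double branched cover of $B^4$ over a trivial properly embedded disk is again $B^4$, none of these stabilizations changes the total space $M$ or the embedded/immersed and locally flat character of the branch surface, and together they give a $d$-fold simple branched covering $p_0' \: M \to W$ whose boundary restriction $\beta' := p_0'|_{\Bd M}$ is the iterated trivial stabilization of $\beta_0$ corresponding to these $d - d_0$ moves, and which still bounds a simple covering of $B^4_1 \cup \dots \cup B^4_k$ branched over ribbon surfaces.

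It remains to replace $\beta'$ by the prescribed $b$. Now $\beta'$ and $b$ are two $d$-fold simple branched coverings of $S^3_1 \cup \dots \cup S^3_k$ by $\Bd M$, both bounding simple coverings of $B^4_1 \cup \dots \cup B^4_k$ branched over ribbon surfaces, with $d \geq 4$ (resp.\ $d \geq 5$), so by Lemma~\ref{concordance/thm} they are concordant: there is a $d$-fold simple branched covering $C \: \Bd M \times [0,1] \to (S^3_1 \cup \dots \cup S^3_k) \times [0,1]$ with $C|_0 = b$, $C|_1 = \beta'$, and branch set a properly self-transversally immersed (resp.\ properly embedded) locally flat PL surface. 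Identifying the source with a collar of $\Bd M$ in $M$ and the target with a collar of $\Bd W$ in $W$, I would glue $C$ to the restriction of $p_0'$ to the complement of that collar in $M$; the two coverings agree over $\Bd M \times \{1\}$, where each restricts to $\beta'$. The outcome is a $d$-fold simple branched covering $p \: M \to W$ with $p|_{\Bd M} = C|_0 = b$, whose branch set $B_p$ --- the union of $B_C$ in the collar and of $B_{p_0'}$ outside it, made transverse to the separating 3-manifold --- is a properly self-transversally immersed (resp.\ properly embedded) locally flat PL surface. That is the covering required.

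Theorem~\ref{bc-cpt-bis/thm} enters in two places. It is the special case $k = 1$, $d = 3n$ of the statement ($W = B^4$ then), so its proof is literally a part of the above construction for those parameters; and, more substantially, the input to Lemma~\ref{concordance/thm} --- that any two ribbon-bounding $d$-fold simple branched coverings of $S^3$ are joined by covering moves, in \emph{every} admissible degree $d$ and not only in those reached by trivially stabilizing a $d_0$-fold covering --- uses the $3n$-fold branched coverings of $B^4$ of Theorem~\ref{bc-cpt-bis/thm} as a common model to which any such covering can be reduced. The hard part, as opposed to the essentially formal gluing above, will be to carry everything out coherently in the ``resp.'' (embedded) case: each trivial stabilization and each elementary step of the concordance $C$ must be of the embedded type, so that $B_p$ stays locally flat and embedded, while the restriction over $B^4_1 \cup \dots \cup B^4_k$ --- equivalently the boundary datum $b$ --- must be kept under control throughout. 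This is exactly the control already built into the proofs of Theorems~\ref{bc-cpt/thm} and~\ref{bc-cpt-bis/thm} and of Lemma~\ref{concordance/thm}, so combining them should raise no obstacle beyond the bookkeeping.
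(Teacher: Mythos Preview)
Your reduction to degree $d_0 \in \{4,5\}$ in the case $d > d_0$ has a genuine gap. The concordance $C \: \Bd M \times [0,1] \to (\cup_i S^3_i) \times [0,1]$ sends each component of $\Bd M \times [0,1]$ into a single $S^3_i \times [0,1]$, so $\beta'$ and $b$ must assign each component of $\Bd M$ to the same $S^3_i$; hence your $\beta_0$ would have to satisfy $\beta_0^{-1}(S^3_i) = b^{-1}(S^3_i)$ for all $i$. But if some $b^{-1}(S^3_i)$ has more than $d_0$ components, no $d_0$-fold cover with that preimage exists, since each component contributes at least one sheet. Concretely, take $k=1$, $d = 15$, and $M$ with five boundary components none of which is $S^3$: a $15$-fold $b$ bounding over ribbon surfaces exists (three sheets per component via \cite{Mo78}), yet no $4$- or $5$-fold simple branched covering $\Bd M \to S^3$ exists at all. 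A secondary issue is that Lemma \ref{concordance/thm}, and Theorem \ref{equiv-boundary/thm} behind it, are stated only for \emph{connected} $3$-manifolds, whereas you apply them over each $S^3_i$ to a possibly disconnected $b^{-1}(S^3_i)$.

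The fix, and the combination the paper intends, is to work in degree $d$ throughout and never pass through $\beta_0$. Take the given ribbon extension of $b$ over $\cup_i B^4_i$, embed the balls in a large $B^4$, and extend the labeled ribbon surfaces to $B^4$ as in the $n>1$ case of the proof of Theorem \ref{bc-cpt/thm}, adding further trivial disks with transposition labels connecting all $d$ sheets (this is precisely the role of the $q''_0$ construction in the proof of Theorem \ref{bc-cpt-bis/thm}). The resulting $d$-fold cover of $B^4$ restricts over $B^4 - \Int(\cup_i B^4_i)$ to $q_0 \: W_0 \to B^4 - \Int(\cup_i B^4_i)$, where $W_0 \subset M$ is a $1$-cobordism from $\Bd M$ to a \emph{connected} $\Bd M'$ and $q_0|_{\Bd M} = b$. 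Now apply the $n=1$ case of Theorem \ref{bc-cpt/thm} to $M' = \Cl(M - W_0)$, stabilize to degree $d$, and glue to $q_0$ via Lemma \ref{concordance/thm} applied to the connected $3$-manifold $\Bd M'$.
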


\section{Branched coverings in the TOP category}\label{BCtop/sec}

In order to deal with topological 4-manifolds, we need a more general notion of branched covering, not requiring PL structures and admitting a possibly wild branch set.

\pagebreak

\begin{definition}\label{bc-top/def}
We call a continuous map $p \: M \to N$ between topological\break $m$-manifolds with (possibly empty) boundary a \emph{tame topological branched covering} if it is locally modeled on PL branched coverings, meaning that for every $y \in N$ there exists a local chart $V$ of $N$ at $y$ and pairwise disjoint local charts $U_i$ of $M$ at all the $x_i \in p^{-1}(y)$, such that $p^{-1}(V) = U = \cup_i U_i$ and $p_| \: U \to V$ is a PL branched covering.
\end{definition}

The local chart $V$ in the above definition can be replaced by an $m$-ball $C$ centered at $y$ such that $p^{-1}(C) = \cup_iC_i$ is the union of pairwise disjoint $m$-balls, each $C_i$ being centered at a point $x_i$ of $p^{-1}(y)$ and each restriction $p_| \: C_i \to C$ being\break topologically equivalent to the cone of a PL branched covering $S^{m-1} \to S^{m-1}$.\break Using this local conical structure, one could also define the notion of topological branched covering by induction on the dimension $m$, starting with ordinary coverings for $m = 1$.

As an immediate consequence of the existence of the local models, a tame topological branched covering $p$ is a discrete open map. Furthermore, the union of all the branch sets of the local restrictions over charts $V$ as in the definition is an\break $(m-2)$-dimensional (locally tame) subspace $B_p \subset N$, which we call the \emph{branch set} of $p$, and the restriction $p_| \: M - p^{-1}(B_p) \to N - B_p$ over the complement of $B_p$ is an ordinary covering of degree $d(p) \leq \infty$, which we call the \emph{degree} of $p$. So, $p$ satisfies property (2) as in Definition \ref{bc-pl/def}, but with $B_p$ being a polyhedron only locally.

On the other hand, $p$ turns out to be complete, satisfying the condition (1) as in Definition \ref{bc-pl/def}, hence it is the Fox completion of $p_| \: M - p^{-1}(B_p) \to N$ (cf. Fox \cite{Fo57} or Montesinos \cite{Mo05}). Thus, like in the PL case, $p$ is completely determined, up to homeomorphisms, by the inclusion $B_p \subset N$ and by the \emph{monodromy} homomorphism $\omega_p \: \pi_1(N-B_p) \to \Sigma_{d(p)}$. Moreover, it still makes sense to speak of \emph{meridians} around $B_p$ (based on the PL structure of local models, the same notion of meridian recalled at page \pageref{meridians} after Definition \ref{bc-pl/def} still works here), and to call $p$ \emph{simple} if the monodromy of each meridian is a transposition.

\begin{definition}\label{bc-wild/def}
We call a continuous map $q \: M \to N$ between topological $m$-manifolds with (possibly empty) boundary a \emph{wild topological branched covering} if it is discrete and open,
$q^{-1}(\Bd N) = \Bd M$, and the following two conditions hold: (1) every $y \in N$ has a compact connected neighborhood $C \subset N$ such that all the connected components of $q^{-1}(C)$ are compact; (2) the restriction $p = q_| \: M - q^{-1}(W_q) \to N - W_q$ over the complement of a closed nowhere dense subspace $W_q \subset N$ is a tame topological branched covering.
\end{definition}

We always assume $W_q$ to be minimal with the property required in the above definition, and call it the \emph{wild set} of $q$. Of course $q$ is actually wild only if $W_q \neq \emptyset$, otherwise it is a tame topological branched covering.

For a wild topological branched covering $q \: M \to N$, with $p$ its tame restriction as in the definition, we call $B_q = W_q \cup B_p$ the \emph{branch set} of $q$ and $d(q) = d(p)$ the \emph{degree} of $q$. By the minimality of $W_q$ and $B_p$, we have $B_q = q(S_q)$, with $S_q \subset M$ denoting the \emph{singular set} of $q$, that is the set of points of $M$ where $q$ is not a local homeomorphism. Then, Theorem 2 of Church \cite{Ch78} applies to give the following estimate for the Lebesgue covering dimension: $\dim S_q = \dim B_q \leq m-2$. This easily implies that $\dim q^{-1}(B_q) \leq m-2$ as well. Therefore, $N - B_q$ and $M - q^{-1}(B_q)$ are dense and locally connected in $N$ and $M$, respectively, and so we can conclude that $q$ is the Fox completion of the restriction $q_| \: M - q^{-1}(B_q) \to N$.\break Since $q_| \: M - q^{-1}(B_q) \to N - B_q$ is an ordinary covering, $q$ is a branched covering in the sense of Fox \cite{Fo57} (for $M$ connected) and Montesinos \cite{Mo05}, and it is completely determined, up to topological equivalence, by the inclusion $B_q \subset N$ and the \emph{monodromy} $\omega_q = \omega_p \: \pi_1(N - B_q) \to \Sigma_{d(q)}$.

In the special case when $M$ and $N$ are compact and $\dim W_q = 0$, according to Montesinos in \cite[Theorem 2]{Mo02}, the Fox \emph{compactification theorem} \cite[pag. 249]{Fo57} can be generalized to see that  $q$ is actually the Freudenthal \emph{end compactification} (see \cite{Fr45}) of its restriction $p$ over $N - W_q$. In particular, $M$ and $N$ are the end compactifications of $M - q^{-1}(W_q)$ and $N - W_q$, respectively, hence $q^{-1}(W_q) \cong \End(M - q^{-1}(W_q))$ and $W_q \cong \End(N - W_q)$.

In light of the above definitions and recalling that any open 4-manifold admits a PL structure (is smoothable) by a theorem of Lashof \cite{La71} (see also Freedman and Quinn \cite{FQ90}), we can state our third theorem about the branched covering representation of topological 4-manifolds. 

\begin{theorem}\label{bc-top/thm}
Every closed connected oriented topological $4$-manifold $M$ can be represented by a topological branched covering $q \: M \to S^4$, which is the one-point compactification of a simple PL branched covering of $R^4$ satisfying property \(a\) or \(b\) as in Theorem \ref{bc-clo/thm}. Then, the branch set $B_q$ is the one-point compactification of a surface in $R^4$ and the wild set $W_q$ consists of at most a single point.
\end{theorem}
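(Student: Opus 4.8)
The plan is to reduce the topological statement to the PL results already established, using the fact that closed topological 4-manifolds are "almost smoothable" in the following sense: for a closed connected oriented topological 4-manifold $M$ and any point $x_0 \in M$, the punctured manifold $M - \{x_0\}$ is an open 4-manifold, hence smoothable (admits a PL structure) by \cite{FQ90}. So the first step is: fix $x_0 \in M$, put a PL structure on $M_0 = M - \{x_0\}$, and observe that $M_0$ is a connected oriented PL 4-manifold. I would then analyze the end space $\End M_0$: since a small deleted neighborhood of $x_0$ in $M$ is homeomorphic to a deleted cone neighborhood, $M_0$ has exactly one end, so $\End M_0$ is a single point and $M$ is the Freudenthal end compactification of $M_0$.

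Next I would apply Theorem \ref{bc-open/thm} to $M_0$. Since $\End M_0$ is a single point, that theorem produces a simple PL branched covering $p_0 \: M_0 \to S^4 - \{*\} = R^4$ with branch set $B_{p_0}$ an unbounded properly immersed (case (a), $d=4$) or properly embedded (case (b), $d=5$) locally flat PL surface in $R^4$. This is the "simple PL branched covering of $R^4$ satisfying property (a) or (b)" named in the statement. The remaining task is to compactify $p_0$ to a map $q \: M \to S^4$, where $S^4$ is the one-point compactification of $R^4$.

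For the compactification step I would invoke the machinery discussed in the paragraph following Definition \ref{bc-wild/def}: by Montesinos \cite[Theorem 2]{Mo02}, the Fox compactification theorem \cite[pag. 249]{Fo57} generalizes so that the end compactification of a branched covering over $R^4$ (when the relevant wild set has dimension $0$) is again a branched covering in the Fox–Montesinos sense. Concretely, $M = M_0 \cup \{x_0\}$ is the end compactification of $M_0$ (one end), and $S^4 = R^4 \cup \{\infty\}$ is the end compactification of $R^4$; the covering $p_0$ carries the single end of $M_0$ to the single end of $R^4$, so the end compactification $q \: M \to S^4$ is a well-defined continuous discrete open surjection, complete in the sense of Fox, with $q|_{M_0} = p_0$. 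Then $q$ is a wild topological branched covering in the sense of Definition \ref{bc-wild/def} with wild set $W_q \subset \{\infty\}$: either $W_q = \emptyset$ (if $q$ happens to be tame at $\infty$) or $W_q = \{\infty\}$, a single point. Its branch set $B_q = W_q \cup \Cl(B_{p_0})$ is the one-point compactification of the surface $B_{p_0} \subset R^4$, and $d(q) = d(p_0) \in \{4,5\}$, as asserted.

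The main obstacle I anticipate is verifying the completeness/compactness condition (1) of Definition \ref{bc-wild/def} at the added point — i.e.\ that $q^{-1}(C)$ has compact components for a suitable compact connected neighborhood $C$ of $\infty$ in $S^4$ — and more precisely checking that the end compactification $q$ really is discrete and open at $x_0$ and that the local structure there is governed by Montesinos's generalization of Fox's compactification theorem. This hinges on $\End M_0$ being a single point (so that $M_0$ has a neighborhood-of-infinity system with connected complements mapping properly), and on the covering $p_0$ restricting, over a neighborhood of $\infty$ in $R^4$, to a proper map from a neighborhood of $x_0$; once that properness and the zero-dimensionality of the wild set are in hand, \cite{Mo02} does the rest. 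A secondary subtlety is recording that $x_0$ was arbitrary and that the resulting $q$ is independent (up to topological equivalence) of the PL structure chosen on $M_0$ — this follows from the Fox completion being determined by $B_q \subset S^4$ and the monodromy $\omega_q = \omega_{p_0}$, as noted in the discussion preceding the theorem — and of course that "this branched cover could be wild at a single point" is exactly the dichotomy $W_q \in \{\emptyset, \{x_0\}\}$, with genuine wildness occurring precisely when $M$ carries no PL structure.
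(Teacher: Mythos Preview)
Your proposal is correct and follows essentially the same route as the paper: puncture $M$ at a point, use \cite{FQ90} to endow the resulting one-ended open manifold with a PL structure, apply Theorem~\ref{bc-open/thm} to get a simple PL branched covering of $R^4$, and then take the one-point compactification. The only cosmetic difference is that the paper first dispatches the case when $M$ is already PL by invoking Theorem~\ref{bc-clo/thm}, whereas you handle both cases uniformly; your more explicit invocation of the Fox--Montesinos compactification machinery (\cite{Fo57}, \cite{Mo02}) to justify that the compactified map is a wild topological branched covering is appropriate and matches the discussion following Definition~\ref{bc-wild/def}.
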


\section{Proofs\label{proofs/sec}}

Our starting point is the branched covering representation of  compact connected oriented 4-dimensional 2-handlebodies up to 2-deformations that is provided in Bobtcheva and Piergallini \cite{BP05,BP12}. As usual, here and in the following, we call a 2-handlebody any handlebody whose handles all have index $\leq 2$, and a 2-deformation any sequence of handle operations (isotopy, sliding and addition/deletion of canceling handles) not involving any handle of index $> 2$.

Below we briefly recall the procedure described in \cite[Section 3]{BP05} (see also \cite[Sections 6.1 and 3.4]{BP12}), for deriving from any Kirby diagram $K$ of a connected oriented\break 4-dimensional 2-handlebody $H$ a labeled ribbon surface $S_K \subset B^4$ representing a simple 3-fold covering $p \: H \to B^4$ branched over $S_K$.

\begin{Figure}[htb]{ribbons/fig}
\fig{}{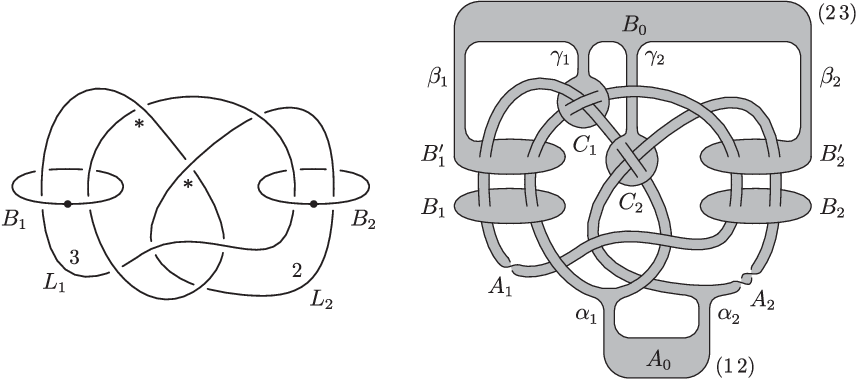}
    {}{A Kirby diagram $K$ and the labeled ribbon surface $S_K$.}
\end{Figure}

Let $K \subset R^3 \cong S^3 - \{\infty\}$ be any Kirby diagram of an oriented 4-dimensional 2-handlebody $H \cong H^0 \cup H^1_1 \cup \dots \cup H^1_m \cup H^2_1 \cup \dots \cup H^2_n$ with a single 0-handle $H^0$, 1-handles $H^1_i$ and 2-handles $H^2_j$. Denote by $B_1,\dots,B_m \subset R^3$ the disjoint disks spanned by the dotted unknots of $K$ representing the 1-handles and by\break $L_1,\dots,L_n \subset R^3$ the framed components of $K$ representing the attaching maps of the 2-handles. Then, the labeled ribbon surface $S_K \subset R^4_+ \cong B^4 -\{\infty\}$ can be constructed as follows (look at Figure \ref{ribbons/fig}, where a simple Kirby diagram $K$ and the corresponding labeled ribbon surface $S_K$ are drawn, respectively on the left and on the right side).

\begin{numberedstatement}{Procedure}[Construction of $S_K$]
\label{SK/proc}\ns

\begin{enumerate}
\item[]{}
\item[1)] Choose a trivializing set of crossings in the diagram of the framed link $L = L_1 \cup \dots \cup L_n$ (the asterisked ones in Figure \ref{ribbons/fig}); denote by $L' = L'_1 \cup \dots \cup L'_n \subset R^3$ the trivial link obtained by inverting those crossings, and by $D_1,\dots,D_n \subset R^3$ a family of disjoint disks spanned by $L'_1,\dots,L'_n$, respectively;
\item[2)] let $A_1,\dots,A_n \subset R^3$ be a family of disjoint (possibly
non-orientable) narrow closed bands, each $A_j$ having $L'_j$ as the core and representing half the framing that $L'_j$ inherits from $L_j$ (by parallel transport at the crossing changes);
\item[3)] let $B'_1,\dots,B'_m \subset R^3$ be a family of disjoint disks, each $B'_i$ being parallel to $B_i$;
\item[4)] let $C_1,\dots,C_\ell \subset R^3$ be a family of disjoint small disks, each $C_k$ being placed at one of the trivializing crossings and forming with the involved bands $A_j$ a fixed pattern of ribbon intersections inside a 3-ball thickening of it, as in Figure \ref{ribbons/fig};
\item[5)] choose a family of disjoint narrow bands $\alpha_1,\dots,\alpha_n \subset R^3$, each $\alpha_j$ connecting $A_j$ to a fixed disk $A_0$ disjoint from all the other disks and bands, with the only constraints that it cannot meet any disk $D_1,\dots,D_n$, the 3-ball spanned by any pair of parallel disks $B_i$ and $B'_i$, and the 3-ball thickening of any $C_k$;
\item[6)] choose a family of disjoint narrow bands $\beta_1,\dots,\beta_m \subset R^3$, each $\beta_i$ connecting $B'_i$ to a fixed disk $B_0$ disjoint from all the other disks and bands, with the same constraints as above;
\item[7)] choose a family of disjoint narrow bands $\gamma_1,\dots,\gamma_\ell \subset R^3$, each $\gamma_k$ connecting $C_k$ to the disk $B_0$, with the same constraints as above;
\item[8)] put $A = A_0 \cup_{j=1}^n (\alpha_j \cup A_j) \subset R^3$ and $B = B_0 \cup_{i=1}^m (\beta_i \cup B'_i) \cup_{k=1}^\ell (\gamma_k \cup C_k) \subset R^3$;
\item[9)] then, $S_K \subset R^4_+ \subset B^4$ is the ribbon surface whose 3-dimensional diagram is given by $A \cup B \cup B_1 \cup \dots \cup B_m$; in other words, $S_K$ is obtained by pushing the interior of the connected surfaces $A,B,B_1,\dots,B_m$ inside the interior of $R^4_+$, in such a way that all the ribbon intersections (formed by $A$ passing through\break $B \cup B_1 \cup \dots \cup B_m$) disappear;
\item[10)] finally, the labeling of $S_K$ giving the monodromy of the simple 3-fold branched covering $p \: H \to B^4$ is the one determined by assigning the transpositions $(1\;2)$ and $(2\;3)$ to the standard meridians of $A_0$ and $B_0 \cup B_1 \cup \dots \cup B_m$, respectively, in the 3-dimensional diagram of $S_K$.
\end{enumerate}
\end{numberedstatement}

The construction above depends on various choices, the significant ones being in steps 1, 5, 6 and 7. However, the labeled ribbon surfaces obtained from different choices become equivalent up to labeled isotopy of ribbon surfaces in $B^4$ (called 1-isotopy in Bobtcheva and Piergallini \cite{BP05,BP12}) and the covering moves $R_1$ and $R_2$ depicted in Figure \ref{rmoves/fig}, after adding to them a separate trivial disk with label $(3\;4)$.
We recall that the addition of such disk represents the stabilization of the branched covering $p$ with an extra trivial fourth sheet to give a simple 4-fold branched covering $\widetilde p \: H \cong  H \cs_{\Bd} B^4 \to B^4$.

The labels $a,b,c$ and $d$ in Figure \ref{rmoves/fig}, as well as in Figures
\ref{kirby/fig} to \ref{cusps/fig}, are assumed to be pairwise distinct.

\begin{Figure}[htb]{rmoves/fig}
\fig{}{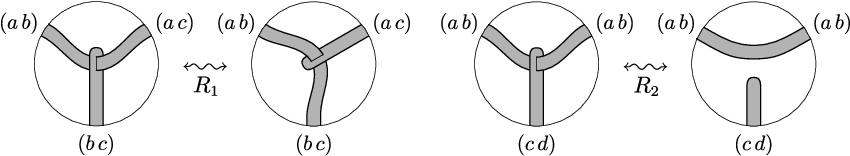}
    {}{Covering moves for labeled ribbon surfaces.}
\end{Figure}

The covering space $H$ of any simple branched covering $p \: H \to B^4$ described by a labeled ribbon surface $S \subset B^4$ is a 4-dimensional 2-handlebody whose handle structure is uniquely determined, up to 2-deformations, by the ribbon structure of $S$. Moreover, the following equivalence theorem holds (Theorem 1 in \cite{BP05}, Theorem 6.1.5 in \cite{BP12}).

\begin{theorem}[\ns\cite{BP05,BP12}]\label{equiv-ribbon/thm}
Let $S$ and $S'$ be two labeled ribbon surfaces in $B^4$ representing compact connected oriented $4$-dimensional $2$-handlebodies as simple branched coverings of $B^4$ of the same degree $\geq 4$. Then, $S$ and $S'$ are related by labeled isotopy of ribbon surfaces and the moves $R_1$ and $R_2$ in Figure \ref{rmoves/fig} if and only if the handlebodies they represent are equivalent up to $2$-deformations.
\end{theorem}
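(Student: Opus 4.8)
The plan is to set up a dictionary between labeled ribbon surfaces in $B^4$ modulo labeled isotopy and the moves $R_1$, $R_2$ on one side, and compact connected oriented $4$-dimensional $2$-handlebodies modulo $2$-deformations on the other, the correspondence being $S \mapsto H$ where $p \: H \to B^4$ is the simple branched covering determined by $S$. The theorem then splits into two implications of quite different character. One asserts that the moves preserve the $2$-deformation class of the covering handlebody; this is a finite local verification. The converse asserts that $2$-deformation-equivalent handlebodies admit move-equivalent ribbon presentations; this requires a normal-form result followed by a move-by-move translation of Kirby calculus.

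For the first implication, recall (as stated just above) that the handle structure of $H$ is determined, up to $2$-deformation, by the ribbon structure of $S$: the disks, the bands joining them, and the ribbon intersections dictate the handles of index $\leq 2$. Each move $R_1$, $R_2$ and each elementary step of a labeled isotopy is supported in a ball $V \subset B^4$, so its effect on $H$ is confined to $p^{-1}(V)$, and it suffices to check, for each of the finitely many local pictures of Figure~\ref{rmoves/fig} and of the elementary isotopies, that $p^{-1}(V)$ is replaced rel boundary by a $2$-deformation-equivalent piece, i.e. that the change consists of handle slides and creation or cancellation of complementary handle pairs of index $\leq 2$. Adjoining a separate trivial disk labeled $(3\;4)$ corresponds to boundary connected sum of $H$ with $B^4$, again a $2$-deformation; this is what permits passage between the degree $3$ of the construction recalled above and the degree $\geq 4$ needed to perform $R_1$ and $R_2$, and lets one bring two surfaces to a common degree.

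For the converse, the first step is a \emph{normalization lemma}: every labeled ribbon surface $S$ as in the statement can be carried, by labeled isotopy and $R_1$, $R_2$, into the form $S_K$ produced by the steps 1)--10) above from some Kirby diagram $K$ of the handlebody $H$ it covers. This is proved much as in the verification that the construction $K \mapsto S_K$ is, up to moves, independent of the choices in steps 1, 5, 6 and 7: one uses the moves to push all ribbon intersections into the standard patterns of Figure~\ref{ribbons/fig} and to simplify the system of disks and bands into the standard components of the construction (the component $A$ with meridian label $(1\;2)$, the component $B \cup B_1 \cup \dots \cup B_m$ with label $(2\;3)$, and the trivial disks coming from stabilization), from whose band pattern one reads off the diagram $K$. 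With $S$ and $S'$ brought to the forms $S_K$ and $S_{K'}$, it remains to show that if $K$ and $K'$ present $2$-deformation-equivalent handlebodies then $S_K$ and $S_{K'}$ are related by labeled isotopy and $R_1$, $R_2$: one writes the $2$-deformation as a finite sequence of elementary Kirby moves not introducing handles of index $> 2$ — diagram isotopy, handle slides of a $1$-handle over a $1$-handle, of a $2$-handle over a $1$-handle and of a $2$-handle over a $2$-handle, and creation or cancellation of a cancelling $0$-$1$ or $1$-$2$ pair — and exhibits, for each such move, an explicit sequence of labeled isotopies and moves $R_1$, $R_2$ realizing it on the ribbon surface.

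The main obstacle is this last translation together with the normalization lemma, and within it the delicate cases are the $2$-handle slides. Sliding one framed component over another, or over a $1$-handle, alters the attaching link in a way that must be reproduced by band handle slides on $A \cup B$ while preserving both the pattern of ribbon intersections and the coherence of the monodromy labeling; a labeled isotopy alone does not suffice, since a band slide generically creates extra intersections whose removal forces a local change of branching data, and it is precisely here that $R_1$ and $R_2$ are genuinely invoked. Proving that these two moves already suffice — that no further covering move is needed — is the heart of the theorem; it ultimately reduces, via a collar $S^3 \times I \subset B^4$, to the covering moves for simple branched coverings of $S^3$ of degree $\geq 4$ established in \cite{Pi91}, from which $R_1$ and $R_2$ are derived.
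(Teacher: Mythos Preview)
The paper does not prove this theorem. It is quoted verbatim as Theorem~1 of \cite{BP05} (Theorem~6.1.5 of \cite{BP12}) and used as a black box; the sentence immediately following the statement is ``For our purposes, we need to consider the implication of the above theorem on the boundary,'' and the paper moves on. So there is no ``paper's own proof'' to compare your proposal against.

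As a sketch of how the result is actually proved in \cite{BP05,BP12}, your outline has the right overall shape --- the easy direction is a local check that the moves induce $2$-deformations, and the hard direction passes through a normal form $S_K$ and then realizes each elementary $2$-deformation of Kirby diagrams by ribbon moves. But two points deserve caution. First, the ``normalization lemma'' and the move-by-move translation of $2$-handle slides are precisely where the length of \cite{BP12} (230 pages) is spent; calling these ``explicit sequences'' understates a substantial body of case analysis, and your text does not indicate how any of those cases actually go. Second, your closing reduction to the $3$-dimensional covering moves of \cite{Pi91} via a collar $S^3 \times I$ is misleading: those moves control branched coverings of $S^3$ up to homeomorphism of the covering $3$-manifold, not $4$-dimensional $2$-handlebodies up to $2$-deformation, and the passage between the two is exactly the content of \cite{BP05,BP12}, not a consequence of \cite{Pi91}. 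If you intend this as a proof rather than a summary, that last step is a genuine gap.
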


For the purposes of this paper, we need to consider the implication of the above theorem on the boundary. This implication is stated in a precise way in the next theorem, which is a restatement of Theorem 2 in \cite{BP05}, or Theorem 6.1.8 in \cite{BP12}. In fact, handle trading and blow-up moves (see Figure \ref{kirby/fig}), introduced therein in order to interpret the Kirby calculus for 3-manifolds in terms of labeled ribbon surfaces, reduce to isotopy when restricted to the boundary.

\begin{Figure}[htb]{kirby/fig}
\fig{}{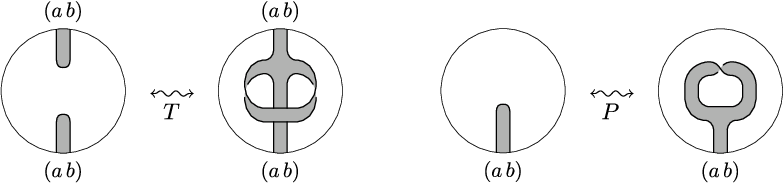}
    {}{Handle trading and blow-up moves for labeled ribbon surfaces.}
\end{Figure}

\begin{theorem}[\ns\cite{BP05,BP12}]\label{equiv-boundary/thm}
Let $L$ and $L'$ be two labeled links in $S^3$ representing closed connected oriented $3$-manifolds as ribbon fillable simple branched coverings of $S^3$ of the same degree $\geq 4$. Then, $L$ and $L'$ are related by labeled isotopy and the moves $B_1$ and $B_2$ in Figure \ref{bmoves/fig} if and only if the oriented $3$-manifolds they represent are PL homeomorphic.
\end{theorem}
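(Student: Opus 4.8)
The plan is to prove the two implications separately, in each case translating between labeled links in $S^3$ and the compact $4$-dimensional $2$-handlebodies they bound in $B^4$, with Theorem~\ref{equiv-ribbon/thm} serving as the interior counterpart. For the \emph{if} direction, labeled isotopy of a link in $S^3$ bounding a labeled ribbon surface obviously leaves unchanged the simple branched covering of $S^3$, hence the $3$-manifold, that it describes. The moves $B_1$ and $B_2$ of Figure~\ref{bmoves/fig} are, by construction, the boundary counterparts of the covering moves $R_1$ and $R_2$ of Figure~\ref{rmoves/fig}; by Theorem~\ref{equiv-ribbon/thm} the latter relate labeled ribbon surfaces presenting $2$-deformation equivalent $2$-handlebodies, so the total space $H$ of the associated simple branched covering $p \: H \to B^4$ is preserved up to $2$-deformation and $\Bd H$ up to PL homeomorphism. (The handle-trading and blow-up moves of Figure~\ref{kirby/fig}, which likewise preserve $\Bd H$, restrict to labeled isotopy on the boundary, as already observed, so they contribute nothing further.) Hence $B_1$ and $B_2$ preserve the $3$-manifold carried by the boundary link, and this implication follows.

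For the \emph{only if} direction, let $\lambda_0,\lambda_1 \subset S^3$ be labeled links of the same degree $d \geq 4$, each bounding a labeled ribbon surface $S_i \subset B^4$ and representing closed connected oriented $3$-manifolds $N_0$ and $N_1$ with $N_0 \cong N_1$. By Theorem~\ref{equiv-ribbon/thm}, each $S_i$ presents, up to $2$-deformation, a compact connected oriented $4$-dimensional $2$-handlebody $H_i$ as a simple $d$-fold branched covering $p_i \: H_i \to B^4$ whose restriction to $\Bd H_i$ is the branched covering of $S^3$ encoded by $\lambda_i$; in particular $\Bd H_i \cong N_i$. I would then invoke the standard extension of the Kirby calculus including $1$-handles, according to which two compact oriented $4$-dimensional $2$-handlebodies with PL homeomorphic boundary are related by a finite sequence of $2$-deformations, handle-trading moves, and blow-ups and blow-downs (for instance, trade all the $1$-handles of $H_i$ for $0$-framed $2$-handles to get framed surgery presentations of $N_i$, and apply Kirby's theorem to the two resulting framed links in $S^3$). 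Translating this sequence through the dictionary of \cite{BP05,BP12} — a $2$-deformation becoming labeled isotopy together with the moves $R_1,R_2$ of Figure~\ref{rmoves/fig} by Theorem~\ref{equiv-ribbon/thm}, and a handle trading or a blow-up/blow-down becoming the corresponding ribbon-surface move of Figure~\ref{kirby/fig} — and then restricting the whole sequence to the boundary $S^3$, the isotopies and the $R_1,R_2$ moves become labeled isotopy and the moves $B_1,B_2$, while the handle-trading and blow-up moves become labeled isotopy. This produces the required chain of labeled isotopies and moves $B_1,B_2$ from $\lambda_0$ to $\lambda_1$.

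The main effort will go into two points. First, the translation of the $2$-deformation steps on the ribbon surfaces must be carried out keeping the degree equal to the common value $d$: this is arranged by an initial stabilization with trivial $(3\;4)$-labeled disks, after which all of $R_1,R_2,B_1,B_2$ — and the handle-trading and blow-up moves — live in a fixed degree $\geq 4$, as recalled before Figure~\ref{rmoves/fig}. Second, and more delicate, one must check at the level of the explicit local pictures of Figures~\ref{rmoves/fig}, \ref{kirby/fig} and~\ref{bmoves/fig} that the boundary restrictions are exactly as asserted — that $R_1,R_2$ restrict precisely to $B_1,B_2$, and that the handle-trading and blow-up moves restrict to labeled isotopy with no residual effect on the link. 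Granting these verifications, together with the Kirby-calculus input above, the statement is the boundary counterpart of Theorem~\ref{equiv-ribbon/thm}, in agreement with \cite[Theorem~2]{BP05} and \cite[Theorem~6.1.8]{BP12}.
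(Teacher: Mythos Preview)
Your proposal is correct and follows the same approach the paper indicates: the paper does not give an independent proof but explains that the theorem is the boundary restatement of Theorem~2 in \cite{BP05} (Theorem~6.1.8 in \cite{BP12}), obtained from Theorem~\ref{equiv-ribbon/thm} together with the Kirby-calculus moves of Figure~\ref{kirby/fig}, the key observation being exactly the one you isolate, namely that the handle-trading and blow-up moves restrict to labeled isotopy on the boundary while $R_1,R_2$ restrict to $B_1,B_2$. Your write-up is simply a more detailed fleshing out of that sketch.
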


\begin{Figure}[htb]{bmoves/fig}
\fig{}{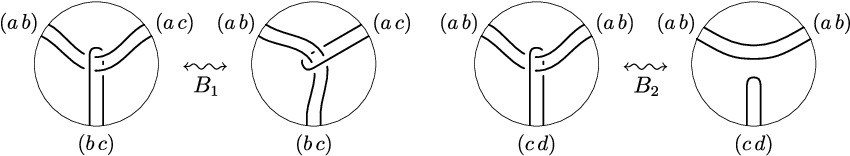}
    {}{Covering moves for labeled links bounding labeled ribbon surfaces.}
\end{Figure}

Now, before proceeding with the proofs of the theorems stated in Section \ref{stats/sec}, let us prove two lemmas.

\begin{lemma}\label{cobordism/thm}
Let $W \cong M \times [0,1] \cup H^1_1 \cup \dots \cup H^1_m \cup H^2_1 \cup \dots \cup H^2_n$ be an oriented $4$-dimensional $2$-cobordism between closed connected oriented $3$-manifolds\break $M_0 = M \times \{0\}$ and $M_1 = \Bd W - M_0$. Then, any $d$-fold simple branched covering $p_0\: M_0 \to S^3 \times \{0\}$ of degree $d \geq 3$ extends to a $d$-fold simple branched covering $p \: W \to S^3 \times [0,1]$, such that $B_p \subset S^3 \times [0,1]$ is a properly embedded locally flat PL surface. Moreover, if $p_0$ is ribbon fillable, then we can choose $p$ in such a way that also $p_1 = p_{|M_1}\:M_1 \to S^3 \times \{1\}$ is ribbon fillable.
\end{lemma}

\begin{proof}
This immediately follows from the main result in Edmonds \cite{Ed78} and its proof.
\end{proof}

\begin{remark}
If one is only interested in the existence of a 3-fold covering $p\:W \to S^3 \times [0,1]$ as in the lemma above, without insisting that it restricts to a given covering $p_0$, then the following argument provides a more explicit construction.

Let $K_0$ be a Kirby diagram representing a 4-dimensional 2-handlebody $W_0 = H^0 \cup H^2_{n+1} \cup \dots \cup H^2_\ell$ such that $\Bd W_0 \cong M$. By identifying a collar $C$ of $\Bd W_0$ in $W_0$ with $M \times [0,1] \subset W$, in such a way that $\Bd W_0$ corresponds to $M \times \{1\}$, we get a 4-dimensional 2-handlebody $W_1 = W_0 \cup_{C \cong M \times [0,1]} W = H^0 \cup H^1_1 \cup \dots \cup H^1_m \cup H^2_1 \cup \dots \cup H^2_n \cup H^2_{n+1} \cup \dots \cup H^2_\ell$. Here, the handles have been reordered in the usual way, once the attaching maps of the handles of $W$ are isotoped in $\Bd W_0$ out of the 2-handles of $W_0$. So, we have a Kirby diagram $K_1$ of $W_1$ that contains $K_0$ as a framed sublink.

Procedure \ref{SK/proc} determines a labeled ribbon surface $S_{K_1}$. By pushing the part of $S_{K_1}$ corresponding to $K_0$ a little bit more inside the interior of 
$B^4$ than the rest of $S_{K_1}$, we can assume that for some $r < 1$ the intersection of $S_{K_1}$ with the\break 4-ball $B^4_r \subset \Int B^4$ of radius $r$ is a copy of $S_{K_0}$ in $B^4_r$. Then, the branched covering $q_1 \: W_1 \to B^4$ represented by $S_{K_1}$ restricts to two branched coverings $q_0 \: W_0 \to B^4_r$ and $q \: W \to B^4 - \Int B^4_r$. At this point, the desired 3-fold simple branched covering $p \: W \to S^3 \times [0,1]$ is just the composition of $q$ with the canonical identification\break $B^4 - \Int B^4_r \cong S^3 \times [0,1]$.
\end{remark}

\begin{lemma}\label{concordance/thm}
Let $M$ be a closed connected oriented $3$-manifold and assume\break $d \geq 4$. For any two $d$-fold ribbon fillable simple branched coverings $p_0,p_1 \: M \to S^3$, there is a $d$-fold simple branched covering $p \: M \times [0,1] \to S^3 \times [0,1]$ satisfying the following properties: 1) the restriction $p_{|M \times\{0\}} \: M \times \{0\} \to S^3 \times \{0\}$ coincides with $p_0 \times \id_{\{0\}}$;
2) the restriction $p_{|M \times\{1\}} \: M \times \{1\} \to S^3 \times \{1\}$ is equivalent to $p_1 \times \id_{\{1\}}$ up to PL homeomorphisms; 3) the branch set $B_p \subset S^3 \times [0,1]$ is a properly immersed locally flat PL surface, whose singularities (if any) consist of an even number of transversal double points. In addition, if $d \geq 5$ there is such a branched covering $p$ with $B_p$ a properly embedded surface.
\end{lemma}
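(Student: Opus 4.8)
The plan is to build $p$ by concatenating finitely many \emph{elementary branched cobordisms}, one for each step of a sequence relating $p_0$ to $p_1$. First I would invoke Theorem \ref{equiv-boundary/thm}: the branch sets $\ell_0 = B_{p_0}$ and $\ell_1 = B_{p_1}$ are labeled links in $S^3$ of the same degree $d \geq 4$, each bounding a labeled ribbon surface in $B^4$ (by hypothesis $p_0$ and $p_1$ extend to simple coverings of $B^4$ branched over ribbon surfaces) and both representing the same $3$-manifold $M$; hence there is a finite sequence $p_0 = q_0, q_1, \dots, q_k = p_1$ of $d$-fold simple branched coverings of $S^3$ in which each $q_i$ is obtained from $q_{i-1}$ either by a labeled isotopy or by one of the moves $B_1$, $B_2$ of Figure \ref{bmoves/fig}. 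It then suffices to realize each of these $k$ steps by a $d$-fold simple branched covering $M \times [0,1] \to S^3 \times [0,1]$ with product total space, whose restrictions to the two ends are $q_{i-1} \times \{0\}$ and $q_i \times \{1\}$, and whose branch surface is properly embedded or immersed, locally flat PL, with at most transversal double points lying in the interior; stacking these cobordisms in order and rescaling $[0,k]$ to $[0,1]$ then yields $p$, since a concatenation of copies of $M \times [0,1]$ is again $M \times [0,1]$.

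For a labeled isotopy step the branch surface of the elementary cobordism is the trace of the ambient isotopy of the labeled link, that is, the mapping cylinder of the isotopy, a disjoint union of properly embedded annuli in $S^3 \times [0,1]$; the monodromy is transported along the isotopy, and lifting the isotopy through the covering shows that the total space is $M \times [0,1]$ with the prescribed restrictions on the ends. Such a step contributes no singularities.

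For a step given by $B_1$ or $B_2$ the modification is supported inside a ball $B^3 \subset S^3$, outside of which the elementary cobordism is a product, and inside $B^3 \times [0,1]$ one constructs the local branched cobordism exactly as in the closed-manifold arguments of \cite{Pi95} (and, for the degree bookkeeping, \cite{IP02}): the move is realized by a one-parameter family of labeled tangles that is generic except possibly at a single instant, at which two branch sheets cross transversally, so the traced branch surface acquires at most one transversal double point, carrying the pair of transpositions prescribed by the move; and --- this being the content of $B_1$, $B_2$ (equivalently $R_1$, $R_2$ of Figure \ref{rmoves/fig} read on the boundary) being covering moves, cf.\ Theorem \ref{equiv-ribbon/thm} --- the local covering space is a product, so the total space of the elementary cobordism is $M \times [0,1]$ with ends $q_{i-1} \times \{0\}$ and $q_i \times \{1\}$. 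Concatenating the $k$ pieces produces a $d$-fold simple branched covering $p \: M \times [0,1] \to S^3 \times [0,1]$ whose branch set $B_p$ is a properly immersed locally flat PL surface with at most finitely many transversal double points, all in $S^3 \times (0,1)$; this settles the case $d \geq 4$.

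When $d \geq 5$ there is, near each double point $x$ of $B_p$, a spare sheet available, and $x$ can be removed by the local modification of branch set and monodromy of \cite{IP02}, carried out inside a small $4$-ball $B_x \subset S^3 \times (0,1)$ centered at $x$: this replaces the two crossing sheets by an embedded piece, leaving the covering unchanged outside $B_x$, hence keeping the total space $M \times [0,1]$ and not affecting $B_p$ near $S^3 \times \{0,1\}$. Performing this at each of the finitely many double points gives a $d$-fold simple branched covering with $B_p$ properly embedded and locally flat PL, completing the proof. I expect the main obstacle to be the middle step: verifying that $B_1$ and $B_2$ are faithfully realized by branched cobordisms of $S^3 \times [0,1]$ with product total space and branch surface acquiring only a controlled transversal double point --- this is the relative counterpart of the core construction of \cite{Pi95,IP02}, and it is where the monodromy bookkeeping must be done carefully.
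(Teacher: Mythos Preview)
Your overall architecture is right and matches the paper's: invoke Theorem \ref{equiv-boundary/thm}, trace the sequence of moves as a surface in $S^3\times[0,1]$, then clean up singularities. The gap is in your treatment of the $B_1$ move.

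You assert that each $B_1$ or $B_2$ step is realized by a one-parameter family of tangles that is generic except at a single instant where two sheets cross transversally, so the trace acquires at most one node. That is simply not what happens for $B_1$. The move $B_1$ is not a crossing change; it cannot be traced by a family of links with a single ordinary double point. In the paper each $B_i$ is first factored as a pair of Montesinos moves $M_i$ (Figure \ref{cmoves/fig}); for $i=1$ these are two \emph{opposite} $M_1$ moves (Figure \ref{cusps/fig}). The trace of an $M_1$ move is a \emph{cusp} singularity, i.e.\ the cone on a trefoil knot, not a transversal double point. So the surface $S=\cup_t(L_t\times\{t\})$ that one honestly gets has, for every $B_1$ in the sequence, a pair of cusps of opposite handedness, together with nodes coming from the $M_2$ moves. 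This is where the hypothesis $d\geq 4$ is actually used: the cusp-cancellation technique of \cite{Pi95} (local in nature, hence applicable in $S^3\times[0,1]$) eliminates each opposite pair of cusps, leaving only nodes. Your write-up never produces the cusps and never cancels them, so as written it does not establish that the branch surface has only transversal double points; and your invocation of $d\geq 4$ has no content, since nothing in your argument consumes that extra sheet.

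A smaller point: for $d\geq 5$ the nodes are removed \emph{in pairs} by the modification of \cite{IP02}, not one at a time near each double point as you describe. Since the $M_2$ moves come in pairs (each $B_2$ contributes two), the parity is fine, but your local ``one node at a time with a spare sheet'' picture is not what \cite{IP02} does.
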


\begin{proof}
By Theorem \ref{equiv-boundary/thm}, the labeled links $L_0$ and $L_1$ representing the coverings $p_0$ and $p_1$, respectively, are related by labeled isotopy and moves $B_1$ and $B_2$ depicted in Figure \ref{bmoves/fig}. Each move $B_i$ can be realized as a composition of two iterations of the same Montesinos move $M_i$ depicted in Figure \ref{cmoves/fig}, applied in opposite directions and in the alternative form of Figure \ref{cmoves-bis/fig} for $i=1$ (the two directions are equivalent for $i=2$). This is shown in Figure \ref{cusps/fig} for $B_1$, while it is trivial for $B_2$ (cf. \cite[page 5]{BP05}, or the proof of Theorem 6.2.3 in \cite{BP12}).

\begin{Figure}[htb]{cmoves/fig}
\vskip3pt
\fig{}{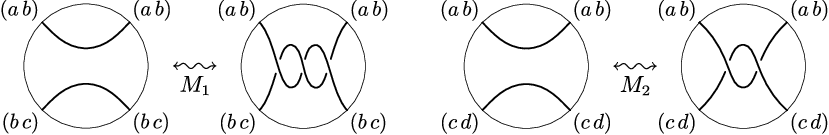}
    {}{Montesinos covering moves for labeled links.}
\end{Figure}

\begin{Figure}[htb]{cmoves-bis/fig}
\vskip3pt
\fig{}{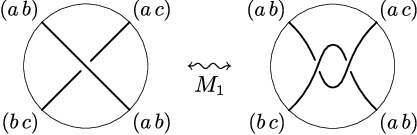}
    {}{Alternative form of move $M_1$.}
\end{Figure}

\begin{Figure}[htb]{cusps/fig}
\vskip6pt
\fig{}{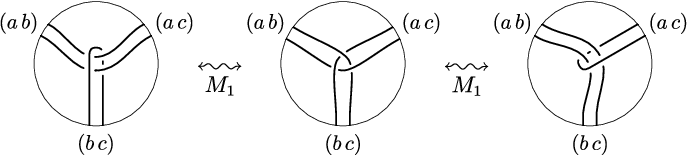}
    {}{Generating $B_1$ move as the composition of two (opposite) $M_1$ moves.}
\end{Figure}

The labeled links $L_0$ to $L_1$ can be joined by a family of singular links $L_t \subset S^3$ with $t \in [0,1]$, which present a singular point at a finite (even) number of values of $t$, say $t_1 < \dots < t_{2n}$, in correspondence of the Montesinos moves, while giving an isotopic deformation of (non-singular) links in each open interval $(t_i,t_{i+1})$ for $i = 1, \dots, 2n-1$. Following the argument proposed by Montesinos in \cite{Mo85}, and then used in Piergallini \cite{Pi95}, things can be arranged in such a way that $S = \cup_{t \in [0,1]} (L_t \times \{t\}) \subset S^3 \times [0,1]$ is a labeled locally flat PL surface with a cusp singularity (the cone of a trefoil knot) for each move $M_1$ and a node singularity\break (a transversal double point) for each move $M_2$. This is suggested by Figure \ref{singularities/fig}. 

\begin{Figure}[htb]{singularities/fig}
\fig{}{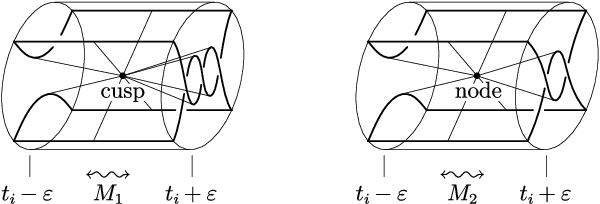}
    {}{Singularities of the branch surface deriving from Montesinos moves.}
\end{Figure}

Then, the labeled surface $S$ determines a $d$-fold simple branched covering $q \: M \times [0,1] \to S^3 \times [0,1]$, whose restrictions over $S^3 \times \{0\}$ and $S^3 \times \{1\}$ are equivalent to $p_0 \times \id_{\{0\}}$ and $p_1 \times \id_{\{1\}}$, respectively, up to PL homeomorphisms.
In particular, there exists a PL homeomorphisms $h\:M \to M$ such that $q_{|M \times \{0\}} \circ (h \times \id_{\{0\}}) = p_0 \times \id_{\{0\}}$, and we can replace $q$ by $q \circ (h \times \id_{[0,1]})$ to have the restriction over $S^3 \times \{0\}$ coinciding with $p_0 \times \id_{\{0\}}$ as required.

Here, cusp singularities come in pairs, each pair corresponding to two opposite moves $M_1$ and hence consisting of cones of a left-handed and a right-handed trefoil knot. Then, since $d \geq 4$, the technique described in \cite{Pi95} applies in the present context as well, being essentially local in nature, in order to remove all the (pairs of) cusp singularities (see Iori and Piergallini \cite{IP02} for a different approach). As the result we get a new labeled surface $S'$ representing a $d$-fold simple branched covering $p \: M \times [0,1] \to S^3 \times [0,1]$, such that $B_p = S'$ is a properly immersed locally flat PL surface whose singularities (if any) are transversal double points. Moreover, as shown in \cite{IP02}, if $d \geq 5$ transversal double points can also be removed in pairs from $B_p$ to give a properly embedded locally flat PL surface.
\end{proof}

At this point, we are ready to prove our main results. 

\begin{proof}[Theorem \ref{bc-cpt/thm}]
The existence of a branched covering $b$ as in the second part of the statement is guaranteed by Procedure \ref{SK/proc} applied to Kirby diagrams representing (4-dimensional 2-handlebodies bounded by) the components of $\Bd M$. So, we can directly assume that $b$ is given. We denote simply by $d=4$ or $5$, depending on the property \(a\) or \(b\) we desire, the degree $d(b)$ of this covering.

Let us start with the case $n = 1$, when $\Bd M$ is connected. Given any relative handlebody decomposition $H$ of $(M,\Bd M)$ with a single 4-handle and no 0-handles, let $M'$ consists of the 1-handles and the 2-handles of $H$ attached to a collar of $\Bd M$, and put $M'' = \Cl(M - M')$. Hence, $M'$ is an oriented 2-cobordism from $\Bd M$ to $\Bd M''$, while
we can think of $M''$ as a 4-dimensional 1-handlebody, by dualizing the 3-handles and the 4-handle of $H$. 

Lemma \ref{cobordism/thm} allows us to extend the given branched covering $b \: \Bd M \to \Bd B^4_1 \cong \Bd B^4_1 \times \{0\} \cong S^3 \times \{0\}$ to a $d$-fold simple covering $p' \: M' \to \Bd B^4_1  \times [0,1] \cong S^3 \times [0,1]$, such that the restriction $p'_1 = p'_| \: \Bd M'' \to S^3 \times \{1\} \cong S^3$ is ribbon fillable.
On the other hand, $M''$ is the boundary connected sum of a certain number $k$ of copies of $S^1 \times B^3$, hence it admits a standard representation as a 2-fold branched covering of $B^4$. This can be stabilized to a simple $d$-fold covering $p'' \: M'' \to B^4\label{cover-p'}$  branched over a ribbon surface (in fact, the union of $k+d-1$ separated trivial disks, with monodromies $(1\,2),\dots,(1\,2),(2\,3),(3\,4)$ and possibly $(4\,5)$, depending on $d$).

Now, Lemma \ref{concordance/thm} gives us a $d$-fold simple branched covering $q \: \Bd M'' \times [0,1] \to S^3 \times [0,1]$ satisfying \(a\) or \(b\) and such that, with the obvious canonical identifications, the restriction $q_0 = q_| \: \Bd M'' \times \{0\} \to S^3 \times \{0\}$  coincides with $p'_1$, while the restriction $q_1 = q_| \: \Bd M'' \times \{1\} \to S^3 \times \{1\}$ is equivalent to the restriction $p''_{\partial} = p''_| \: \Bd M'' \to S^3$ up to PL homeomorphisms. 

Then, we can glue together the coverings $p'$ and $p''$ through $q$, by identifying the corresponding restrictions, to obtain a $d$-fold simple branched covering $p \: M \to B^4$ with the property \(a\) or \(b\). In fact, according to Laudenbach and Po\'enaru \cite{LP72}, the result of the gluing is always PL homeomorphic to $M$, no matter what the homeomorphism occurring in the identification between $q_1$ and $p''_\partial$ is. This concludes the proof of the case $n = 1$.

The case $n > 1$ can be reduced to $n = 1$ as follows. Denote by $C_1, \dots, C_n$ the connected components of $\Bd M$. For every $i = 1, \dots, n$, we consider the restriction $b_i = b_{|}\:C_i \to \Bd B^4_i$ and a $d$-fold simple covering $q_i \: W_i \to B^4_i$ branched over a ribbon surface $B_{q_i} \subset B^4_i$ that extends $b_i$.
Then, we enlarge the 4-balls $B^4_1,\dots,B^4_n$ to disjoint PL 4-balls $\widehat B^4_1,\dots,\widehat B^4_n \subset S^4$ with a collar of their boundary, and each labeled surface $B_{q_i}$ to a properly embedded labeled ribbon surface $\widehat B_{q_i} \subset \widehat B^4_i$ by using the product structure along the collar.
Let $B^4 \subset S^4$ be a PL 4-ball obtained by attaching to $\widehat B^4_1 \cup \dots \cup \widehat B^4_n$ an embedded 1-handle between $\widehat B^4_i$ and $\widehat B^4_{i+1}$ for each $i=1,\dots,n-1$. These 1-handles can be chosen so that each attaching 3-ball meets $\Bd \widehat B_{q_1} \cup \dots \cup \Bd \widehat B_{q_n}$ in $d-1$ trivial arcs labeled $(1\,2),\dots,(d-1\,d)$. Finally, we attach labeled bands running along the connecting 1-handles of $B^4$ to get a labeled ribbon surface in $B^4$, as sketched in Figure \ref{csum/fig} (where the bands labeled $(4\,5)$ occur only if $d=5$).

\begin{Figure}[htb]{csum/fig}
\fig{}{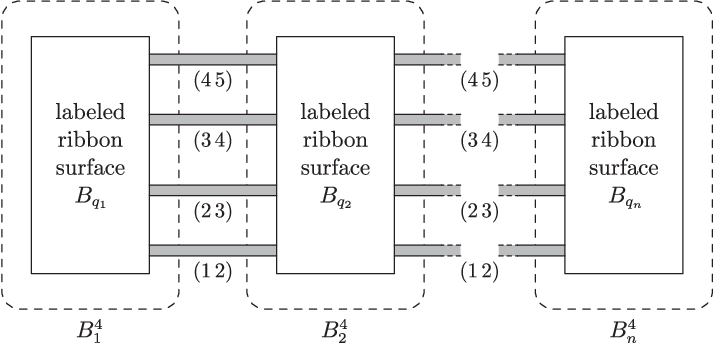}
    {}{The labeled ribbon surface $B_q$.}
\vskip-3pt
\end{Figure}

This is the labeled branch set $B_{q} \subset B^4$ of a $d$-fold simple branched covering $q\:W = W_0 \cup W_1 \cup \dots \cup W_n \to B^4$, where $W_0 \cong (\Bd M \times [0,1]) \cup H^1_1 \cup \dots \cup H^1_{n-1}$ is a 1-cobordism between $\Bd M = C_1 \cup \dots \cup C_n$ and $C \cong C_1 \cs \dots \cs C_n$, with the 1-handle $H^1_i$ connecting $C_i$ and $C_{i+1}$. 

Since $M$ is connected, we can assume $W_0 \subset M$ and put $M' = \Cl (M - W_0)$.
The restriction $q_0 = q_|\: W_0 \to B^4 - \Int(B^4_1 \cup \dots \cup B^4_n)$ is a simple branched covering whose branch surface is properly embedded in $B^4 - \Int(B^4_1 \cup \dots \cup B^4_n)$, while the restriction $q_{|}\:W_i \to B^4_i$ coincides with $q_i$ by construction for every $i=1,\dots,n$.

By construction, the restriction $q_{0|\Bd M'} \: \Bd M' \to S^3$ is ribbon fillable, bounding the covering $q$. Hence, by the case $n = 1$ proved above, we can extend such restriction to a simple covering $p' \: M' \to B^4$ satisfying property \(a\) or \(b\). Then, to obtain the wanted branched covering $p \: M \to S^4 - \Int (B^4_1 \cup \dots \cup B^4_n)$, we just glue $q_0$ and $p'$ together by identifying their restrictions over $S^3$.
\end{proof}

\begin{proof}[Theorem \ref{bc-cpt-bis/thm}]
Following the proof of Theorem \ref{bc-cpt/thm} for $n > 1$ and adopting the notations therein, we consider: the decomposition $\Bd M = C_1 \cup \dots \cup C_n$; the cobordism $W_0 = (\Bd M \times [0,1]) \cup H^1_1 \cup\dots \cup H^1_{n-1} \subset M$ between $\Bd M$ and $\Bd M' \cong C_1 \cs \dots \cs C_n$, with $M' = \Cl(M - W_0)$; the 3-fold simple branched coverings $q_i \: W_i \to B^4$ and their restrictions to the boundary $q_{i|} \: C_i \to S^3$, $i = 1, \dots, n$; the simple branched covering $p'\: M' \to B^4$.

Now, let $q'_0 \: \Bd M \times [0,1] \to S^3 \times [0,1]$ be the 3$n$-fold simple branched covering such that $C_i \times [0,1]$ is given by the three sheets from $3i-2$ to $3i$, and the restriction $q'_{0|C_i \times [0,1]}$ coincides with $q_{i|} \times \id_{[0,1]}$ up to a shifting by $3(i-1)$ in the numbering of the sheets. This covering $q'_0$ can be extended to a 3$n$-fold simple branched covering $q''_0 \: W_0 \to S^3 \times [0,1]$, by adding separated trivial disks $D_1, \dots, D_{n-1}$ to the labeled branch set, with $\Bd D_i \subset S^3 \times \{1\}$ and $D_i$ labeled by $(1\;3i + 1)$. The restriction of $q''_0$ over $S^3 \times \{1\}$ gives a 3$n$-fold simple branched covering $q''_{0|} \: \Bd M' \to S^3 \times \{1\}$.

Finally, we apply Theorem \ref{bc-cpt/thm} (actually the strong version of Lemma \ref{concordance/thm} provided by it, where the restriction $p_{|M \times \{1\}}$ coincides with the covering $p_{1} \times \id_{\{1\}}$) to connect such restriction with the restriction over the boundary of a stabilization to degree $3n > 5$ of the covering $p' \: M' \to B^4$. This gives the wanted simple branched covering $p \: M \to B^4$, and concludes the proof of the first part of the statement.

For the second part, it suffices to replace the covering $q_0'$ in the argument above by $b \times \id_{[0,1]}$, with $b \: \Bd M \to S^3$ any given 3$n$-fold ribbon fillable simple branched covering.
\end{proof}

\begin{proof}[Theorem \ref{bc-open/thm}]
By a standard argument, it is possible to construct an infinite family $\{M_i\}_{i \geq 0}$ of (non-empty) 4-dimensional compact connected PL submanifolds of $M$, such that $M = \cup_{i \geq 0}M_i$ and $M_i \subset \Int M_{i+1}$ for every $i \geq 0$. Then, we put $W_0 = M_0$ and $W_i = \Cl(M_i - M_{i-1})$ for every $i \geq 1$, and note that these are 4-dimensional compact PL submanifolds of $M$. Furthermore, for every $i \geq 1$, we can assume that each component $C$ of $W_i$ shares exactly one boundary component with $M_{i-1}$ (otherwise, if there are more shared components, we connect them by attaching to $M_{i-1}$ some 1-handles contained in $C \cap \Int M_i$). Let $\{C_v\}_{v \in V}$ be the set of all components of all the $W_i$'s, and $\{B_e\}_{e \in E}$ be the set of all their boundary components. We can think of $V$ and $E$ as the sets of vertices and edges of a graph $T$, respectively, with the edge $e \in E$ joining the vertices $v,w \in V$ if and only if $C_v$ and $C_w$ share the boundary component $B_e$. Actually, the above assumption about the intersection of the components of each $W_i$ and the corresponding $M_{i-1}$, implies that $T$ is a tree. We assume $T$ rooted at the vertex $v_0$ with $C_{v_0} = W_0$ and orient the edges of $T$ starting from $v_0$, so that each vertex $v \neq v_0$ has a single incoming edge $e_0^v$ and a non-empty set of outgoing edges $e^v_1, \dots, e^v_{n(v)}$ (we will use this notation also for the edges outgoing from $v_0$). According to this setting, the components of $W_i$ are the $C_v$ such that $d(v,v_0) = i$, where $d$ denotes the edge distance in $T$. Moreover, for each such component $C_v$ we have $\Bd C_v = B_{e^v_0} \cup B_{e^v_1} \cup \dots \cup B_{e^v_{\smash{n(v)}}}$, with $B_{e^v_0}$ the unique boundary component shared with $W_{i-1}$ if $i \geq 1$, while the boundary components $B_{e^v_1}, \dots , B_{e^v_{\smash{n(v)}}}$ are shared with $W_{i+1}$. In light of these facts, it is not difficult to see that $\End M \cong \End T$, with end points bijectively corresponding to infinite rays in $T$ starting from $v_0$.

Now, based on the same tree $T$, we want to construct a similar pattern in $S^4$
consisting of families ${\{C'_v\}}_{v \in V}$ and ${\{B'_e\}}_{e \in E}$. We begin with
any family ${\{B^4_e\}}_{e \in E}$ of standard PL 4-balls in $S^4$ satisfying the
following properties: 1) the diameter of $B^4_e$ vanishes when the edge distance
$d(e,v_0)$ goes to infinity; 2) $B^4_{e^v_1}, \dots, B^4_{e^v_{\smash{n(v)}}}$ are
pairwise disjoint for every $v \in V$ and contained in $\Int B^4_{e^v_0}$ if $v \neq v_0$.
Then, we put $C'_v = B^4_{e^v_0} - \Int(B^4_{e^v_1} \cup \dots \cup
B^4_{e^v_{\smash{n(v)}}})$ for every $v \in V$ (assume $B^4_{e^{\smash{v_0}}_0} = S^4$),
and $B'_e = \Bd B^4_e$ for every $e \in E$. By the very definition, we have $\Bd C'_v =
B'_{e^v_0} \cup B'_{e^v_1} \cup \dots \cup B'_{e^v_{\smash{n(v)}}}$ for every $v \in V$
(assume $B'_{e^{\smash{v_0}}_0} = \emptyset$). Moreover, $C'_v$ and $C'_w$ share the
boundary component $B'_e$ if and only if the edge $e$ joins the vertices $v,w \in V$ as
above, and thus $\End(\cup_{v \in V}C'_v) \cong \End T \cong \End M$.

The space $X = S^4 - \cup_{v \in V}C'_v = \cap_{i\geq 0}\cup_{d(e,v_0)=i} B^4_e$ is tame in $S^4$ (cf. Osborne \cite{Os66}). In particular, we can have $X \subset S^1$ by choosing each $B^4_e$ to be a round spherical 4-ball centered at a point of $S^1 \subset S^4$. We can conclude that $X \cong \End M$, being $S^4$ the Freudenthal compactification of $S^4 - X = \cup_{v \in V}C'_v$.

At this point, we can define the desired branched covering $p \: M \to S^4 - X$ in three steps. First, for every $e \in E$, we choose a Kirby diagram $K_e$ providing an integral surgery presentation of $B_e$, and denote by $p_e \: B_e \to B'_e \cong S^3$ the restriction to the boundary of the simple branched covering of $B^4$ determined by the labeled ribbon surface $S_{K_e}$, stabilized to degree 4 or 5, depending on the property \(a\) or \(b\) we want to obtain for $p$. Then, for every $v \in V$, we apply Theorem \ref{bc-cpt/thm} in order to extend $p_{e^v_0} \cup p_{e^v_1} \cup \dots \cup p_{e^v_{\smash{n(v)}}} \: \Bd B_v \to \Bd B'_v$ to a simple branched covering $p_v \: C_v \to C'_v$ satisfying property \(a\) or \(b\). Finally, we define $p = \cup_{v \in V}p_v \: M = \cup_{v \in V} C_v \to S^4 - X = \cup_{v \in V} C'_v$.
\end{proof}

\begin{proof}[Theorem \ref{bc-top/thm}]
Because of Theorem \ref{bc-clo/thm}, it suffices to consider the case when $M$ is not PL. Since any open 4-manifold admits a PL structure (see Lashof \cite{La71} or Freedman and Quinn \cite[Sec\-tion 8.2]{FQ90}), we can apply Theorem \ref{bc-open/thm} to the one-ended open connected oriented 4-manifold $M - \{x\}$, with $x$ any point of $M$, in order to get a PL branched covering $p \: M -\{x\} \to R^4$ satisfying property \(a\) or \(b\). Then, the one-point compactification of $p$ gives the wanted wild branched covering $q \: M \to S^4$, once $M$ and $S^4$ are identified with the one-point compactifications of $M - \{x\}$ and $R^4$, respectively.
\end{proof}

\begin{proof}[Theorem \ref{bc-open-bis/thm}]
Consider the decomposition $M = \cup_{i \geq 0} W_i\,$ and the families $\{C_v\}_{v \in V}$ and $\{B_e\}_{e \in E}$, as in the proof of Theorem \ref{bc-open/thm}. For every $i \geq 0$, put $M_i = W_i \cap W_{i+1} = \Bd W_i \cap \Bd W_{i+1}$ and observe that this is a closed 3-manifold with a finite number $n_i \leq n$ of components. The sequence $(n_i)_{i \geq 1}$ is non-decreasing, and without loss of generality we can assume $n_i \geq 2$ for every $i \geq 0$. Then, denoting by $S_i^3$ and $B_i^4$ respectively the 3-sphere and the 4-ball of radius $i$ in $R^4$, there is a 3$n_i$-fold\break simple branched covering $b_i \: M_i \to S_{i+1}^3$, bounding a 3$n_i$-fold simple covering of $B_{i+1}^4$ branched over a ribbon surface. Theorem \ref{cobordism-bis/thm}, which combines Theorems \ref{bc-cpt/thm} and \ref{bc-cpt-bis/thm} proved above, gives us 3$n_i$-fold simple coverings $p_i \: W_i \to \Cl(B_{i+1}^4 - B_i^4)$ with $i \geq 0$, such that the restrictions of $p_i$ over $S^3_i$ and $S^3_{i+1}$ respectively coincide with a 3$n_i$-fold stabilization of $b_{i-1}$ and with $b_i$ (where $b_{-1}$ is empty). At this point, we can glue the $p_i$'s together, up to stabilization. More precisely, we start with $p_0$, and then we add each $p_i$ in order, by gluing it to the appropriate $3n_i$-fold stabilization of $\cup_{j < i}p_j$. This gives the wanted 3$n$-fold branched covering $p = \cup_{i\geq0} p_i \: M = \cup_{i\geq0}W_i \to R^4$.
\end{proof}

\section{Final remarks}

We remark that all the simple branched coverings obtained in Theorems \ref{bc-cpt/thm}, \ref{bc-cpt-bis/thm}, \ref{bc-open/thm}, \ref{bc-open-bis/thm} and \ref{bc-top/thm} can be stabilized to any degree greater than the stated one. While this is obvious for branched coverings of $S^4$ or $B^4$ (like in Theorem \ref{bc-clo/thm}), in the other cases it can be achieved by suitable covering stabilizations in the construction process. 

We just sketch the case of Theorem \ref{bc-cpt/thm}, as the other cases can be treated in a similar way. In the proof of this theorem, it is enough to let $d$ be any given number $\geq 4$ from the beginning. Then, the monodromies of the $k + d - 1$ branch disks of the $d$-fold branched covering $p'' \: M'' \to B^4$ at page \pageref{cover-p'} change to $(1\,2), \dots, (1\,2), (2\,3), \dots, (d-1\,d)$, while the rest of the proof for the case $n = 1$ can be repeated word by word. Similarly, for the case $n > 1$ the only change is that the monodromies of the $d - 1$ bands in Figure \ref{csum/fig} become
$(1\,2), (2\,3), \dots, (d{-}1\,d)$.

We also remark that the arguments in the proofs of those theorems can be combined to prove various extensions of them. In particular, we have the following.
\begin{enumerate}
\item[1)] {\sl Any non-compact connected oriented PL 4-manifold $M$ whose boundary has only compact components, is a simple branched covering of $S^4 - (\End M \cup_{c \in C} \Int B^4_c)$, where ${\{B^4_c\}}_{c\in C}$ is a family of pairwise disjoint 4-balls in $S^4 - \End M$, indexed by the set $C$ of the boundary components of $M$. The limit set of the balls ${\{B^4_c\}}_{c\in C}$ is contained in $\End M \subset S^4$.}
\item[2)] {\sl Any compact connected oriented topological 4-manifold $M$ with boundary is a simple topological branched covering of $S^4 - \Int(\cup_{c \in C} B^4_c)$ with at most one wild point.}
\end{enumerate}

Finally, we observe that when $M$ does not admit a PL structure, the branch set $B_p$ of the covering $p \: M \to S^4$ in Theorem \ref{bc-top/thm} cannot be reduced to a locally flat PL surface properly immersed or embedded in $S^4$ by our proof. In fact, in this case there is a single wild point in $B_p$, at which we concentrate all the pathological aspects of the topology of $B_p$ and/or of the inclusion $B_p \subset S^4$. However, one might wonder if the situation could be simplified by diffusing the wild set $W_p$, or even more if such wild set could be eliminated at all, to get a tame topological branched covering at least under particular circumstances.

On the other hand, if the Kirby-Siebenmann invariant $\ks(M)$ is non-zero, then there is no a tame topological branched covering $p \: M \to S^4$ such that $B_p$ is an embedded or a self-transversally immersed topologically locally flat surface in $S^4$. Indeed, any such surface admits a compact tubular neighborhood $T \subset S^4$ by Freedman and Quinn \cite[Section 9.3]{FQ90}. Then, $T$ admits a PL structure such that $B_p$ is a PL embedded or immersed surface in $T$, hence $\ks(T) = 0$. Putting $U = \Cl(S^4-T) \subset S^4$, we also have $\ks(U) = 0$, since the Kirby-Siebenmann invariant is additive and $\ks(S^4) = 0$. It follows that $p^{-1}(T)$ is PL because it is a branched covering of $T$, and $p^{-1}(U) \times R$ is smoothable because it is an unbranched covering of $U \times R$, which we know to be smoothable. Therefore,  $\ks(M) = \ks(p^{-1}(T)) + \ks(p^{-1}(U)) = 0$.

So, we conclude with the following open problem.

\begin{question}
When, in representing a connected oriented topological 4-mani\-fold $M$ that is not PL by a simple branched covering $p \: M \to S^4$, can we require $B_p$ to be a topological surface wildly immersed or embedded in $S^4$? If $\ks(M) = 0$, can we require $p$ to be a tame topological branched covering, with $B_p$ a (locally) tame 2-complex or a topological surface (locally) tamely immersed or embedded in $S^4$?
\end{question}

\section*{Acknowledgements}
The authors are members of GNSAGA -- Istituto Nazionale di Alta Matematica ``Francesco Severi'', Italy.

The second author acknowledges support of the 2013 ERC Advanced Research Grant 340258 TADMICAMT.

The authors are grateful to the anonymous referee for his or her suggestions, which have been useful for improving the manuscript.

\thebibliography{00}

\bibitem{BP05} I. Bobtcheva and R. Piergallini, {\sl Covering moves and Kirby calculus}, preprint 2005, arXiv:math/0407032.

\bibitem{BP12} I. Bobtcheva and R. Piergallini, {\sl On 4-dimensional 2-handlebodies and  3-mani\-folds}, J. Knot Theory Ramifications {\bf 21} (2012), 1250110 (230 pages).

\bibitem{Ch78} P.T. Church, {\sl Discrete maps on manifolds}, Michigan Math. J. {\bf 25} (1978), 351--357.

\bibitem{Ed78} A.L. Edmonds, {\sl Extending a branched covering over a handle}, Pacific J. of Math. {\bf 79} (1978), 363--369.

\bibitem{Fo57} R.H. Fox, {\sl Covering spaces with singularities}, in ``Algebraic Geometry  and Topology. A symposium in honour of S. Lefschetz'', Princeton University Press 1957,  243--257.

\bibitem{Fr45} H. Freudenthal, {\sl \"Uber die Enden diskreter R\"aume und Gruppen}, Comment. Math. Helv. {\bf 17} (1945), 1--38.

\bibitem{FQ90} M.H. Freedman and F. Quinn, {\sl Topology of 4-manifolds}, Princeton  Math.\break Series {\bf 39}, Princeton University Press 1990.
		
\bibitem{IP02} M. Iori and R. Piergallini, {\sl 4-manifolds as covers of $S^4$ branched over non-singular surfaces}, Geometry \& Topology {\bf 6} (2002), 393--401.

\bibitem{Ki95} R. Kirby, {\sl Open problems in low-dimensional topology}, Geometric topology, Proceedings of the 1993 Georgia International Topology Conference, AMS/IP Studies in Advanced Mathematics, American Mathematical Society, 1997, 35--473. Available at math.berkeley.edu/\kern-1.5pt\raisebox{-10pt}{\LARGE\char126}kirby.

\bibitem{La71}
R. Lashof,
{\sl The immersion approach to triangulation and smoothing}, Algebraic topology (Proc. Sympos. Pure Math., Vol. XXII, Univ. Wisconsin, Madison, Wis., 1970), 131--164, Amer. Math. Soc., 1971.

\bibitem{LP72} F. Laudenbach and V. Po\'enaru, {\sl A note on 4-dimensional handlebodies}, Bull. Soc. Math. France {\bf 100} (1972), 337--344.

\bibitem{Mo78} J.M. Montesinos, {\sl 4-manifolds, 3-fold covering spaces and ribbons}, Trans. Amer. Math. Soc. {\bf 245} (1978), 453--467.

\bibitem{Mo85} J.M. Montesinos, {\sl A note on moves and irregular coverings of $S^4$}, Contemp. Math. {\bf 44} (1985), 345--349.

\bibitem{Mo02} J.M. Montesinos, {\sl Representing open 3-manifolds as 3-fold branched coverings}, Rev. Mat. Complut. {\bf 15} (2002), 533--542.

\bibitem{Mo05} J.M. Montesinos, {\sl Branched coverings after Fox}, Bol. Soc. Mat. Mexicana {\bf 11} (2005), 19--64.
 
\bibitem{Os66} R.P. Osborne, {\sl Embedding Cantor sets in a manifold}, Mich. Math. J. {\bf 13} (1966), 57--63.

\bibitem{Pi91} R. Piergallini, {\sl Covering Moves}, Trans Amer. Math. Soc. {\bf 325} (1991), 903--920.

\bibitem{Pi95} R. Piergallini, {\sl Four-manifolds as 4-fold branched covers of $S^4$}, Topology {\bf 34} (1995), 497--508.

\bibitem{PZ17-2} R. Piergallini and D. Zuddas, {\sl Branched coverings of $\CP^2$ and other basic 4-ma\-nifolds}, arXiv:1707.03667 (2017).

\bibitem{Po07} G. Pollini, {\sl Topological 4-manifolds as branched covers}, PhD thesis,  Universit\`a di Roma ``La Sapienza'', 2007.

\bibitem{RS72} C.P. Rourke and B.J. Sanderson, {\sl Introduction to piecewise-linear topology}, Ergebnisse der Mathematik und ihrer Grenzgebiete {\bf 69}, Springer-Verlag 1972.

\endthebibliography

\end{document}